\newtheorem{theorem}{Theorem}[section]
\newtheorem{corollary}{Corollary}[section]
\newtheorem{proposition}{Proposition}[section]
\newtheorem{remark}{Remark}[section]
\newcounter{theor}
\newtheorem{thm}[theor]{Theorem}
\def\s{\mathbb{S}}
\def\R{\mathbb{R}}
\def\vol{\mathrm{vol}}
\def\e{\mathrm{e}}
\newcommand{\dlat}{\mathrm{d}}
\def\e{\mathrm{e}}
\def\esc#1{\left\langle #1\right\rangle}
\def\g{\mathrm{g}}
\newcommand{\centroid}[1]{\mathrm{g}_{#1}}
\numberwithin{equation}{section}
\begin{document}
\title[A general functional version of Gr\"unbaum's inequality]{A general functional version of Gr\"unbaum's inequality}

\author[D. Alonso]{David Alonso-Guti\'errez}
\address{\'Area de an\'alisis matem\'atico, Departamento de matem\'aticas, Facultad de Ciencias, Universidad de Zaragoza, C/ Pedro Cerbuna 12, 50009 Zaragoza (Spain), IUMA}
\email{alonsod@unizar.es}

\author[F. Mar\'in]{Francisco Mar\'in Sola}
\address{Departamento de Matem\'aticas, Universidad de Murcia, Campus de
Espinar\-do, 30100 Murcia, Spain}
\email{francisco.marin7@um.es} \email{jesus.yepes@um.es}

\author[J. Mart\'in]{Javier Mart\'in Go\~ni}
\address{\'Area de an\'alisis matem\'atico, Departamento de matem\'aticas, Facultad de Ciencias, Universidad de Zaragoza, C/ Pedro Cerbuna 12, 50009 Zaragoza (Spain), IUMA and
Faculty of Computer Science and Mathematics, University of Passau, Innstrasse 33,
94032 Passau, Germany}
\email{j.martin@unizar.es; javier.martingoni@uni-passau.de}

\author[J. Yepes]{Jes\'us Yepes Nicol\'as}

\thanks{The first author is supported by MICINN project PID2022-137294NB-I00 and DGA project
E48$\_$23R. The second and the fourth authors are supported by the grant PID2021-124157NB-I00, funded by MCIN/AEI/10.13039/501100011033/``ERDF A way of making Europe'', as well as by the grant  ``Proyecto financiado por la CARM a través de la convocatoria de Ayudas a proyectos para el desarrollo de investigación científica y técnica por grupos competitivos, incluida en el Programa Regional de Fomento de la Investigación Científica y Técnica (Plan de Actuación 2022) de la Fundación Séneca-Agencia de Ciencia y Tecnología de la Región de Murcia, REF. 21899/PI/22''. The third author is supported by the Austrian Science Fund (FWF) Project P32405 \textit{Asymptotic Geometric Analysis and Applications}.
The fourth author is also supported by the grant RYC2021-034858-I funded by MCIN/AEI/10.13039/501100011033 and by the ``European Union NextGenerationEU/PRTR''}

\subjclass[2010]{Primary 26B25, 26D15, 52A38, 52A40; Secondary 52A20}

\keywords{Gr\"unbaum's inequality, centroid, sections of convex bodies, $p$-concavity}

\begin{abstract}
A classical inequality by Gr\"unbaum provides a sharp lower bound for the ratio $\vol(K^{-})/\vol(K)$, where $K^{-}$ denotes the intersection of a convex body with non-empty interior $K\subset\R^n$ with a halfspace bounded by a hyperplane $H$ passing through the centroid $\g(K)$ of $K$.

\smallskip

In this paper we extend this result to the case in which the hyperplane $H$ passes by any of the points lying in a whole uniparametric family of $r$-powered centroids associated to $K$ (depending on a real parameter $r\geq0$), by proving a more general functional result on concave functions.

\smallskip

The latter result further connects (and allows one to recover) various inequalities involving the centroid, such as a classical inequality (due to Minkowski and Radon) that relates the distance of $\g(K)$ to a supporting hyperplane of $K$, or a result for volume sections of convex bodies proven independently by Makai Jr.\,\&\,Martini and Fradelizi.
\end{abstract}

\maketitle

\section{Introduction}

Let $K\subset\R^n$ be a compact set with positive volume $\vol(K)$, i.e., with positive
$n$-dimensional Lebesgue measure (along the paper, the $k$-dimensional Lebesgue measure of $M$, provided that $M$ is measurable, is denoted by $\vol_k(M)$ and we will omit the index $k$ when it is equal to the dimension $n$ of the ambient space; furthermore, when integrating $\dlat x$ will stand for $\dlat \vol(x)$).
The centroid of $K$ is the affine-covariant point
\[\g(K):=\frac{1}{\vol(K)}\int_K x\,\dlat x.\]

According to a classical result by Gr\"unbaum \cite{Gr}, if $K$ is convex (from now on a compact convex set will be referred to as a \emph{convex body}) with centroid at the origin, then
\begin{equation}\label{e:Grunbaum}
\frac{\vol(K^{-})}{\vol(K)}\geq\left(\frac{n}{n+1}\right)^n,
\end{equation}
where $K^{-}=K\cap\{x\in\R^n: \esc{x,u}\leq0\}$ and $K^{+}=K\cap\{x\in\R^n: \esc{x,u}\geq0\}$
represent the parts of $K$ which are split by the (vector) hyperplane $H=\{x\in\R^n: \esc{x,u}=0\}$,
for any given $u\in\s^{n-1}$.

\medskip

There exists a classical inequality similar in spirit to Gr\"unbaum's result, attributed to Minkowski for $n=2,3$ and Radon for general $n$, which bounds the distance from $\g(K)$ to a supporting hyperplane of the convex body $K$ (see \cite[p.~57-58]{BoFe}).
This result asserts that when $K$ has centroid at the origin then $K\subset-nK$, a fact that is equivalent to the following statement (here $M|E$ denotes the orthogonal projection of the subset $M\subset\R^n$ onto a vector subspace $E$ of $\R^n$):
\begin{thm}\label{t:Minkowski_Radon}
Let $K\subset\R^n$ be a convex body with non-empty interior and let $H$ be a hyperplane. If $K$ has centroid at the origin then
\begin{equation}\label{e:Minkowski_Radon}
\frac{\vol_1\bigl(K^-|H^\perp\bigr)}{\vol_1\bigl(K|H^\perp\bigr)}\geq \frac{1}{n+1}.
\end{equation}
\end{thm}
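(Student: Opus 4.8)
The plan is to reduce the $n$-dimensional statement to a one-dimensional inequality about the centroid of a single-variable concave-type function, obtained by integrating along slices parallel to $H$. Concretely, fix the hyperplane $H$ with unit normal $u$, and for $t\in\R$ let $f(t)=\vol_{n-1}\bigl(K\cap(H+tu)\bigr)$ be the section function of $K$ in the direction $u$. By the Brunn--Minkowski inequality, $f^{1/(n-1)}$ is concave on its support, which is an interval $[a,b]=K|H^\perp$ (identifying $H^\perp$ with $\R$ via $u$). Moreover, by Fubini, $\vol(K)=\int_a^b f(t)\,\dlat t$, the hypothesis $\g(K)=0$ translates into $\int_a^b t\,f(t)\,\dlat t=0$, and, since $K$ is convex so that $K^-=K\cap\{t\le 0\}$ projects onto $[a,0]$, we get $\vol_1(K^-|H^\perp)=-a$ while $\vol_1(K|H^\perp)=b-a$. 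Thus \eqref{e:Minkowski_Radon} is equivalent to the purely one-dimensional claim: if $f\ge 0$ is supported on $[a,b]$ with $f^{1/(n-1)}$ concave and $\int_a^b t f(t)\,\dlat t=0$, then $-a/(b-a)\ge 1/(n+1)$, i.e. $b\le -na$, i.e. $b/|a|\le n$.

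Next I would prove this one-dimensional inequality by the standard ``tilting/shifting to extremal profile'' argument. The quantity to be bounded is $b$ (after normalizing $a=-1$, say), subject to $f$ being a concave power and having barycenter $0$. The extremal case should be when $f^{1/(n-1)}$ is affine on $[a,b]$, i.e. $f(t)=c\,(t-a)^{n-1}$ (a cone); among all admissible $f$ with a given support $[a,b]$ and given total mass, replacing $f$ by such an affine profile moves the barycenter toward $a$ (or can be arranged to), so the constraint $\g=0$ forces $b$ to be largest exactly for the cone. For $f(t)=c(t-a)^{n-1}$ on $[a,b]$ a direct computation of $\int_a^b t(t-a)^{n-1}\dlat t$ and setting it to zero yields a linear relation between $a$ and $b$; one checks it gives precisely $b=-na$, hence equality in \eqref{e:Minkowski_Radon}. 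This identifies the cone (equivalently, $K$ a simplex) as the equality case, matching the classical Minkowski--Radon statement.

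Alternatively — and this is likely the intended route given the paper's framing — one can avoid reproving the one-dimensional fact from scratch by invoking the general functional result the paper announces (the ``general functional version of Gr\"unbaum's inequality'' on concave functions) in the limiting/degenerate regime that corresponds to comparing a linear functional of the section function rather than its integral; the Minkowski--Radon inequality is stated in the abstract as one of the results that the main functional theorem ``allows one to recover.'' In that approach the work is entirely in setting up the correspondence: express $\vol_1(K^-|H^\perp)$ and $\vol_1(K|H^\perp)$ in terms of $\supp f$, note the centroid condition as a moment condition on $f$, and then apply the functional inequality to $g=f^{1/(n-1)}$, reading off \eqref{e:Minkowski_Radon} as the $r\to\infty$ (or appropriate boundary) specialization of the family of $r$-powered-centroid inequalities.

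The main obstacle is the reduction step, specifically justifying that it suffices to consider the affine (cone) profile: one must rule out that some non-affine admissible $f$ pushes $b$ further out. The clean way is a symmetrization/rearrangement lemma — given the support interval and the value of $f$ at one interior point, among concave $(n{-}1)$-th powers the one linear in $t$ has the barycenter closest to the left endpoint — together with a monotonicity check that enlarging $b$ while keeping mass fixed and $f$ a cone moves $\g$ to the right, so equality $\g=0$ caps $b$ at $-na$. Handling the boundary behavior of $f$ near $a$ and $b$ (where $f$ may vanish to order $n-1$, so the integrals converge but care is needed with the concavity on the closed interval) is the only technical subtlety; everything else is routine Fubini and a single explicit integral.
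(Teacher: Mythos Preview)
Your reduction to a one-dimensional statement about the section function $f$ is correct and matches the paper. Both of your routes, however, have errors in execution.

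In the direct approach you have the extremal cone oriented backwards. To prove $b\le -na$ you need the profile that \emph{minimizes} the centroid on a given support $[a,b]$, and among $(1/(n-1))$-concave $f$ this is the cone with apex at the \emph{right} endpoint, $f(t)=c(b-t)^{n-1}$, whose centroid is $a+(b-a)/(n+1)$; setting this to $0$ gives $b=-na$. Your cone $f(t)=c(t-a)^{n-1}$ has centroid $a+n(b-a)/(n+1)$, the \emph{maximum} possible among such profiles, and setting it to zero yields $b=-a/n$, not $b=-na$. Correspondingly, your claim that this affine replacement ``moves the barycenter toward $a$'' has the wrong sign. The symmetrization strategy is fine once you use the correct cone.

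For the second route---which is exactly what the paper does---the specialization is not ``$r\to\infty$''. The paper applies Theorem~\ref{t:functional_centroid_general_alpha_beta} to $h=f^{1/(n-1)}$ with $\alpha=n-1$ kept fixed (so that $\centroid{\alpha}(h)=\int tf\big/\int f$ is the ordinary centroid coordinate of $K$, hence $0$) and sends $\beta\to 0^+$. The point of $\beta\to 0^+$ is that $h^\beta\to\chi_{(a,b)}$, so the left-hand side $\int_0^b h^\beta\big/\int_a^b h^\beta$ collapses to the \emph{length} ratio $b/(b-a)$, while the bound $\bigl((\beta+1)/(\alpha+2)\bigr)^{\beta+1}\to 1/(n+1)$. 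This gives $b/(b-a)\ge 1/(n+1)$, which (after swapping $u\leftrightarrow -u$) is \eqref{e:Minkowski_Radon}. The exponent that must degenerate is $\beta$, to turn an integral ratio into a projection-length ratio; the centroid parameter $\alpha$ stays fixed at $n-1$ so that the $\alpha$-centroid is the ordinary one.
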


\smallskip

Another inequality of this type, but now involving volume sections instead of projections, is the following inequality \eqref{e:Frad_Makai_Martini}. It was shown (independently) by \cite{MaMaI}, and later by \cite{Fr}, who further proved this result when considering sections by planes of arbitrary dimension.

\begin{thm}[\cite{Fr,MaMaI}]\label{t:Frad_Makai_Martini}
Let $K\subset\R^n$ be a convex body with non-empty interior, let $H$ be a hyperplane and let $f:[a,b]\longrightarrow \R_{\geq 0}$ be the function given by $f(t) = \vol_{n-1}\bigl({K \cap (tu + H)}\bigr)$.
If $K$ has centroid at the origin then
\begin{equation}\label{e:Frad_Makai_Martini}
\frac{f(0)}{\|f\|_{\infty}}\geq \left(\frac{n}{n+1}\right)^{n-1}.
\end{equation}
\end{thm}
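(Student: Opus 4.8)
The plan is to reduce \eqref{e:Frad_Makai_Martini} to a one-dimensional statement about concave functions and then to trap the (vanishing) first moment between two estimates that come from a single chord of the graph.

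First I would apply the Brunn--Minkowski inequality to the parallel sections $K\cap(tu+H)$: it gives that $g:=f^{1/(n-1)}$ is concave on its support, which---$K$ having non-empty interior and centroid at the origin---is an interval $[a,b]$ with $a<0<b$. By Fubini the assumption $\g(K)=0$ turns into $\int_a^b t\,f(t)\,\dlat t=0$, i.e.\ $\int_a^b t\,g(t)^{n-1}\,\dlat t=0$. So it is enough to prove: \emph{if $g\colon[a,b]\to\R_{\ge 0}$ is concave, $a<0<b$, and $\int_a^b t\,g(t)^{n-1}\,\dlat t=0$, then $g(0)\ge\tfrac{n}{n+1}\|g\|_\infty$}; raising to the power $n-1$ gives \eqref{e:Frad_Makai_Martini}.

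Normalise $\|g\|_\infty=1$, attained at a point $c$. Since $t\mapsto g(-t)$ preserves all the hypotheses and the value $g(0)$, I may assume $c\le 0$; the case $g(0)=1$ being trivial, assume $c<0$, $g(0)<1$, and set $\lambda:=(1-g(0))/(-c)>0$, so that the line through $(c,1)$ and $(0,g(0))$ is $t\mapsto g(0)-\lambda t$. Concavity gives two one-sided bounds: for $t>0$ the graph of $g$ lies below that line, so $g(t)\le g(0)-\lambda t$, and since $g\ge 0$ this already forces $b\le g(0)/\lambda$; for $t\in[c,0]$ the graph lies above the chord joining its endpoints, so $g(t)\ge 1-\lambda(t-c)$. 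Now use $\int_0^b t\,g(t)^{n-1}\dlat t=\int_a^0(-t)\,g(t)^{n-1}\dlat t$: bound the left side above by $\int_0^{g(0)/\lambda} t\,(g(0)-\lambda t)^{n-1}\dlat t$, and the right side below by $\int_c^0(-t)\,(1-\lambda(t-c))^{n-1}\dlat t$ (discarding the nonnegative part over $[a,c]$). Both integrals are elementary---substitute $s=g(0)-\lambda t$ and $s=1-\lambda(t-c)$ respectively, using $-c=(1-g(0))/\lambda$. After cancelling the common factor $1/\lambda^{2}$ and clearing denominators, every term carrying a power $g(0)^{n}$ or $g(0)^{n+1}$ drops out and one is left with exactly $n\le(n+1)g(0)$, that is, $g(0)\ge n/(n+1)$.

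No step is a serious obstacle; the care lies in the setup---checking that $0$ is interior to $\supp f$, that the maximiser of $g$ may be taken non-positive after the reflection $t\mapsto-t$, and, above all, using the single chord through $(c,1)$ and $(0,g(0))$ in the correct direction on each side of the origin (above $g$ to the right of $0$, below $g$ on $[c,0]$). The clean collapse to $n\le(n+1)g(0)$ is to be expected: equality in \eqref{e:Frad_Makai_Martini} forces $g$ to be the linear ``cone'' $g(t)=(b-t)/(b-c)$ vanishing at the far endpoint, which is precisely the extremiser. One could alternatively try to deduce \eqref{e:Frad_Makai_Martini} from \eqref{e:Grunbaum} or from Theorem~\ref{t:Minkowski_Radon}, but the direct argument above is shorter and already sharp.
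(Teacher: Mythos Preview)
Your argument is correct. The secant-line trick is sound: concavity of $g$ puts the graph below the line $\ell(t)=g(0)-\lambda t$ for $t>0$ (forcing $b\le g(0)/\lambda$) and above it on $[c,0]$, and the two resulting one-sided bounds on $\int t\,g(t)^{n-1}\,\dlat t$ combine exactly as you say. With the substitution $s=g(0)-\lambda t$ one gets $A=g(0)^{n+1}/\bigl(\lambda^2 n(n+1)\bigr)$ and $B=\lambda^{-2}\bigl(\tfrac{1}{n+1}-\tfrac{g(0)}{n}+\tfrac{g(0)^{n+1}}{n(n+1)}\bigr)$, so the higher-order terms in $g(0)$ cancel and $A\ge B$ reduces to $g(0)\ge n/(n+1)$. (A minor wording point: only $g(0)^{n+1}$-terms appear and cancel; there is no separate $g(0)^n$-term.)

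This is, however, a genuinely different route from the one taken in the paper. There Theorem~\ref{t:Frad_Makai_Martini} is not proved directly but is read off from the general functional result (Theorem~\ref{t:functional_centroid_general_alpha_beta}): one applies \eqref{e:int_ineq_alpha<beta} with $h=f^{1/(n-1)}$ and $\alpha=n-1$, raises to the power $1/\beta$, and lets $\beta\to\infty$, so that the $L^\beta$-average on the left converges to a supremum and yields \eqref{e:Frad_Makai_Martini}. The paper's derivation thus rests on the full machinery of Section~\ref{s:proof} (the measure-transport comparison with the triangle $L_g$), whereas your argument is self-contained and needs nothing beyond Brunn's concavity principle and an elementary integral computation. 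What the paper's approach buys is unification---it exhibits \eqref{e:Frad_Makai_Martini}, Gr\"unbaum's inequality, and Theorem~\ref{t:Minkowski_Radon} as three limiting regimes of a single two-parameter inequality---while your approach buys brevity and a transparent view of the extremiser (the affine $g$ vanishing at one endpoint).
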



\smallskip

Gr\"unbaum's result was extended to the case of sections \cite{FrMeYa,MyStZh} and projections \cite{StZh} of compact convex sets, and generalized to the analytic setting of \emph{log-concave} functions \cite{MeNaRyYa} (see also \cite[Lemma~2.2.6]{BrGiVaVr}) and $p$-concave functions \cite{MyStZh}, for $p>0$. Other Gr\"unbaum type inequalities involving volumes of sections of compact convex sets through their centroid, later generalized to the case of classical and dual \emph{quermassintegrals} in \cite{StYa}, can be found in \cite{Fr,MaMaI}.

\smallskip

Exploiting the original proof of Gr\"unbaum in \cite{Gr}, the following extension of Gr\"unbaum's inequality \eqref{e:Grunbaum} to the case of compact sets with some concavity for the function which gives the volumes of cross-sections parallel to a given hyperplane (having Gr\"unbaum's inequality as the particular case $p=1/(n-1)$) was shown in \cite{MSYN}.

\begin{thm}[\cite{MSYN}]\label{t:Grunbaum_pgeq0}
Let $K\subset\R^n$ be a compact set with non-empty interior and with centroid at the origin. Let $H$ be a hyperplane such that the function $f:H^{\bot}\longrightarrow\R_{\geq0}$ given by $f(x)=\vol_{n-1}\bigl(K\cap(x+H)\bigr)$ is $p$-concave, for some $p\in(0,\infty)$. Then
\begin{equation}\label{e:Grunbaum_p>0}
\frac{\vol(K^{-})}{\vol(K)}\geq\left(\frac{p+1}{2p+1}\right)^{(p+1)/p}.
\end{equation}
The inequality is sharp.
\end{thm}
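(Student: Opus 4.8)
The plan is to follow the idea of Gr\"unbaum's original argument: reduce the problem to a one-dimensional statement about $p$-concave functions, then replace that function by a single power cone whose centroid can be located by hand.

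First I pass to dimension one. Let $u\in\s^{n-1}$ be the unit normal of $H$, oriented so that $K^-=K\cap\{x\colon\esc{x,u}\le0\}$, and set $g(t)=\vol_{n-1}\bigl(K\cap(tu+H)\bigr)$; by hypothesis $g$ is $p$-concave, with support a compact interval $[a,b]$, and by Fubini's theorem $\vol(K)=\int g$, $\vol(K^-)=\int_{-\infty}^0 g$, while $\g(K)=0$ reads $\int t\,g(t)\,\dlat t=0$. If $0\notin(a,b)$ one of $K^{\pm}$ has full volume and the inequality is trivial, so I assume $a<0<b$; then $g(0)>0$, since $g^p$ is concave and hence positive on the interior of its support. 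As everything is invariant under $t\mapsto\lambda t$ and $g\mapsto\mu g$ ($\lambda,\mu>0$), I normalise $g(0)=1$, and I write $V^-=\int_a^0 g$, $V^+=\int_0^b g$. If $V^-\ge V^+$ then $\vol(K^-)/\vol(K)\ge 1/2$, which is larger than the claimed bound, so I may also assume $V^-\le V^+$. The theorem then amounts to proving $V^-/(V^-+V^+)\ge\bigl((p+1)/(2p+1)\bigr)^{(p+1)/p}$.

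\smallskip

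The heart of the matter is the comparison of $g$ with the power cone $\hat g(t)=\bigl(\tfrac{t-\pi}{-\pi}\bigr)^{1/p}$ on $[\pi,\beta]$, where $\pi<0$ is determined by $\hat g(0)=1$ together with $\int_\pi^0\hat g=V^-$ (so $-\pi=(p+1)V^-/p$), and $\beta>0$ by $\int_0^\beta\hat g=V^+$; thus $\hat g$ has the same value at $0$ and the same partial volumes $V^-,V^+$ as $g$. Concavity of $g^p$ yields two facts. First, $g(t)\ge\bigl(\tfrac{t-a}{-a}\bigr)^{1/p}$ on $[a,0]$ (a concave function lies above its chords, and $g^p(a)\ge0$), whence $V^-\ge(-a)p/(p+1)$ and so $\pi\le a$. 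Second, from $g^p(t)\le 1+(g^p)'(0^-)\,t$ on $[a,0]$ one gets $V^-\le p/\bigl((p+1)(g^p)'(0^-)\bigr)$ whenever $(g^p)'(0^-)>0$, hence $(g^p)'(0^+)\le(g^p)'(0^-)\le p/((p+1)V^-)=1/(-\pi)$, which by concavity gives $g\le\hat g$ on $[0,b]$ and therefore $\beta\le b$. Now I compare first moments. On $[0,b]$, extending $\hat g$ by $0$, the function $\hat g-g$ is $\ge0$ on $[0,\beta]$, $\le0$ on $[\beta,b]$, and has integral $0$; the elementary estimate for a function with a single sign change gives $\int_0^\beta t\,\hat g\le\int_0^b t\,g$. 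On $[\pi,0]$, extending $g^p$ by $0$, the difference $\hat g^p-g^p$ is convex on $[a,0]$ (being affine minus concave there) and positive on $[\pi,a)$; a convex function vanishing at the right endpoint cannot pass from $-$ to $+$, so $\hat g^p-g^p$, and with it $\hat g-g$, changes sign at most once, from $+$ to $-$, and since it too has integral $0$ we get $\int_\pi^0 t\,\hat g\le\int_a^0 t\,g$. Adding the two, $\int_\pi^\beta t\,\hat g(t)\,\dlat t\le\int_a^b t\,g(t)\,\dlat t=0$: the centroid of $\hat g$ lies at or to the left of $0$. For the power cone one computes directly, with $S=(\beta-\pi)/(-\pi)=\bigl((V^-+V^+)/V^-\bigr)^{p/(p+1)}$,
\[
\int_\pi^\beta t\,\hat g(t)\,\dlat t=-\,\pi^2\,S^{(p+1)/p}\cdot\frac{p\bigl((2p+1)-(p+1)S\bigr)}{(p+1)(2p+1)},
\]
so $\int_\pi^\beta t\,\hat g\le0$ is equivalent to $S\le(2p+1)/(p+1)$, i.e.\ to $(V^-+V^+)/V^-\le\bigl((2p+1)/(p+1)\bigr)^{(p+1)/p}$, i.e.\ to the desired inequality $V^-/(V^-+V^+)\ge\bigl((p+1)/(2p+1)\bigr)^{(p+1)/p}$. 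Equality forces $g=\hat g$ (a.e.), so the bound is sharp; for $p=1/(n-1)$ this extremiser is the section function of a cone with a vertex at $\pi u$.

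\smallskip

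The step I expect to be the most delicate is the one-sided single-crossing comparison on $[\pi,0]$: that replacing $g$ on $[a,0]$ by the stretched power cone $\hat g|_{[\pi,0]}$ (same value at $0$, same mass $V^-$) moves the partial centroid no farther to the right. The convexity remark above is what makes this work cleanly --- on $[a,0]$ the difference of the $p$-th powers is affine minus concave, hence convex, and the one-sign-change structure of $\hat g-g$ then follows and licenses the usual first-moment estimate --- but one must be careful that the possible jump of $g^p$ at the endpoint $a$, and the various shapes of the convex difference on $[a,0]$, are all compatible with ``at most one sign change, $+$ then $-$''. The auxiliary bound $(g^p)'(0^\pm)\le 1/(-\pi)$, which yields both $\pi\le a$ and $\beta\le b$ and is needed for the right-hand comparison, is the other point requiring attention; once these are in place the rest is the elementary computation with $\hat g$ recorded above.
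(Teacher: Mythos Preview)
Your argument is correct and matches the approach of \cite{MSYN}, which the paper cites for this result and whose main construction is recalled verbatim in the proof of Proposition~\ref{p:midpoint}: one replaces $f$ by a $p$-affine cone $g_p$ (your $\hat g$) having the same value at $0$ and the same two partial integrals, and one establishes the support relations $-\gamma\le a$, $\delta\le b$ (your $\pi\le a$, $\beta\le b$). Your single-crossing first-moment comparison on each half-line is then the standard way to conclude that the centroid of the cone sits at or to the left of $0$, after which the explicit computation for $\hat g$ yields the bound and the extremiser.
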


Following the idea of the proof of the previous result, we will first show that an analogous statement holds true when one replaces the centroid by the midpoint in a direction $u\in\s^{n-1}$, namely, the point $\bigl[(a+b)/2\bigr]\cdot u$, where $[a,b]$ is the support of the function $f:\R\longrightarrow\R_{\geq0}$ given by \[f(t)=\vol_{n-1}\bigl(K\cap(tu+H)\bigr)\]
(see Proposition \ref{p:midpoint}).

In view of these results (Theorem \ref{t:Grunbaum_pgeq0} and Proposition \ref{p:midpoint}), here we ask about the possibility of finding other particular points that ensure a large enough amount of mass in both subsets that are obtained when cutting the given compact set $K\subset\R^n$ by a hyperplane passing through them.
To figure out such a possible family of points we notice that, fixed a unit direction $u\in\s^{n-1}$, the corresponding components w.r.t. $u$ of both the centroid and the midpoint have a similar nature. Indeed, the component of $\g(K)$ w.r.t. $u$ is given by (see \eqref{e:g(K)_first_comp})
\begin{equation*}
[\g(K)]_1=\frac{1}{\vol(K)}\int_a^b tf(t)\,\dlat t=\frac{\int_a^b t f^1(t) \,\dlat t}{\int_a^b f^1(t) \,\dlat t},
\end{equation*}
whereas the corresponding component of the midpoint is
\begin{equation*}
\frac{a+b}{2}=\frac{\int_a^b t f^0(t) \,\dlat t}{\int_a^b f^0(t) \,\dlat t}.
\end{equation*}
Thus, with the above-mentioned aim in mind, it seems reasonable to consider the points $\centroid{r}\cdot u$, where
\begin{equation}\label{e:lambda_r}
\centroid{r}:=\frac{\int_a^b t f^r(t) \,\dlat t}{\int_a^b f^r(t) \,\dlat t}
\end{equation}
for any $r\geq0$. 
Here we show that such a uniparametric class of points allows us to extend Gr\"unbaum's inequality (or more generally Theorem \ref{t:Grunbaum_pgeq0}) to the case in which one replaces the classical centroid by any of them.

\begin{theorem}\label{t:p_concave_r_centroid}
Let $r\in[0,\infty)$ and let $K\subset\R^n$ be a compact set with non-empty interior having the point $\centroid{r}\cdot u$, with respect to some direction $u\in\s^{n-1}$, at the origin. Let $H=\{x \in \R^{n}: \esc{x,u}=0\}$ be the hyperplane with normal vector $u$ and assume that the function $f:H^{\bot}\longrightarrow\R_{\geq0}$ given by $f(x)=\vol_{n-1}\bigl(K\cap(x+H)\bigr)$ is $p$-concave, for some $p\in(0,\infty)$. If $r\geq1$ then
\begin{equation*}
\frac{\vol(K^{-})}{\vol(K)}\geq\left(\frac{p+1}{2p+r}\right)^{(p+1)/p},
\end{equation*}
whereas if $0\leq r\leq1$ then
\begin{equation*}
\frac{\vol(K^{-})}{\vol(K)}\geq\left(\frac{p+r}{2p+r}\right)^{(p+1)/p}.
\end{equation*}
\end{theorem}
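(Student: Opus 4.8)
The plan is to reduce the $n$-dimensional statement to a one-dimensional problem about the $p$-concave function $f(t)=\vol_{n-1}\bigl(K\cap(tu+H)\bigr)$ on its support $[a,b]$, exactly as in the proof of Theorem \ref{t:Grunbaum_pgeq0}. Writing $\vol(K)=\int_a^b f(t)\,\dlat t$ and $\vol(K^-)=\int_a^0 f(t)\,\dlat t$, the hypothesis that $\centroid{r}\cdot u$ lies at the origin means precisely that $\int_a^b t f^r(t)\,\dlat t=0$. So I must minimize $\int_a^0 f / \int_a^b f$ over all nonnegative $p$-concave $f$ on $[a,b]$ subject to this $r$-barycentric constraint. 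The standard technique is to replace $f$ by the extremal $p$-concave function with the same value at $0$ and the same ``$r$-mass distribution'' on each side: on each of $[a,0]$ and $[0,b]$ one compares $f$ with the affine-in-$f^p$ function (a truncated power function $t\mapsto (\alpha+\beta t)_+^{1/p}$) agreeing with $f$ at $0$; $p$-concavity forces one function to dominate the other near $0$ and the reverse far from $0$, and a single crossing point together with the equality of the relevant weighted integrals pins down the comparison of the plain integrals. This shows the minimizer is, on each side of $0$, a truncated power of the form $c\,(1+s t/\ell)^{1/p}$ (possibly degenerating so that the ``outer'' end is at a finite point where $f$ does not vanish, i.e., a genuine cross-section of $K$ survives there).

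Next I would carry out the resulting finite-dimensional optimization. After normalizing $f(0)=1$ and the slopes, the extremal configuration is governed by two parameters, essentially the two endpoints $a<0<b$ (equivalently the two ``heights'' $f(a)^p$, $f(b)^p$ read off from the power functions), constrained by $\int_a^b t f^r=0$. I compute $\int_a^b f$ and $\int_a^0 f$ as explicit elementary functions (Beta-type integrals of $(1+\text{linear})^{1/p}$) of these parameters, impose the constraint, and minimize the ratio. I expect the optimum to occur at a boundary of the parameter domain: either $b=+\infty$ is not allowed by $p$-concavity and integrability, but one side may flatten to where the power function stays bounded while the other side is a full cone vanishing at its endpoint. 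The two regimes $r\ge 1$ and $0\le r\le 1$ should correspond to which side of $0$ the surviving cross-section sits on: for $r$ large the $r$-th power over-weights the tall values, pushing the constraint $\int t f^r=0$ to force the degenerate (non-vanishing) end onto the $K^+$ side, giving the bound $\bigl((p+1)/(2p+r)\bigr)^{(p+1)/p}$; for $r$ small the degeneracy moves to the $K^-$ side and yields $\bigl((p+r)/(2p+r)\bigr)^{(p+1)/p}$, with $r=1$ (Grünbaum's case) being the common value $\bigl((p+1)/(2p+1)\bigr)^{(p+1)/p}$ where both regimes meet.

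The main obstacle will be the case analysis in the one-dimensional reduction: establishing rigorously that the extremal $f$ is a truncated power on each side of $0$ and, crucially, identifying on which side the function is allowed to not vanish at the endpoint. The sign of $r-1$ enters through the comparison of $\int t f^r$ on the two sides (the weight $f^{r-1}$ relative to $f^r$ tilts toward or away from $0$), and one must check that the candidate extremal configuration is actually admissible (i.e.\ genuinely $p$-concave and that a compact set $K$ realizing it exists) and that competing configurations — two vanishing ends, or the degenerate end on the ``wrong'' side — give a larger ratio. A clean way to organize this is to prove a sharp one-dimensional lemma: for $p$-concave $f\ge0$ on $[a,b]$ with $\int_a^b t f^r\,\dlat t=0$, one has $\int_a^0 f \big/\int_a^b f \ge \bigl((p+1)/(2p+\max\{r,1\})\bigr)^{(p+1)/p}$ when $r\ge1$ and the analogous bound with $p+r$ when $r\le1$, with the displacement/crossing argument of \cite{MSYN} supplying the reduction and the explicit computation above supplying the constant; the $n$-dimensional theorem is then immediate since $f^{1/(n-1)}$-type concavity of cross-sections is exactly $p$-concavity of $f$.
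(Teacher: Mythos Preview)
Your one-dimensional reduction is correct and is also the paper's starting point. The gap is in your reduction to a piecewise $p$-affine extremizer. You propose to match the ``$r$-mass distribution'' on each side of $0$ and use a single crossing to compare the plain integrals $\int f$. But a single crossing between $f$ and a $p$-affine $\tilde f$ with $\tilde f(0)=f(0)$, together with (say) $\int_0^b f^r=\int_0^{\tilde b}\tilde f^r$, controls the sign of $\int_0^b t(f^r-\tilde f^r)$; it does \emph{not} directly control $\int_0^b(f-\tilde f)$, because passing from $f^r-\tilde f^r$ to $f-\tilde f$ multiplies by a factor $\sim\xi^{1/r-1}$ whose monotonicity along the interval depends on $r\gtrless1$ and on where the crossing occurs. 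Conversely, if you instead match $\int f$ on each side (as in \cite{MSYN}) so that the ratio is preserved, a single crossing does not tell you that the $r$-centroid of the replacement remains at, or to the right of, $0$. Either way you have not established that the infimum over $p$-concave $f$ with $\centroid{r}=0$ is attained within the piecewise $p$-affine class with $\centroid{r}=0$, which is precisely what your finite-dimensional optimization presupposes.

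The paper proceeds differently. It replaces $h=f^p$ by a \emph{single} decreasing affine function $g(t)=c(\delta-t)$ on $[\gamma,\delta]$, matching $h$ at the cut point and matching $\int h^\beta=\int f$ on each side; this preserves the ratio exactly and reduces the problem to showing that the cut point $\centroid{\alpha}(h)$ is bounded above by an explicit point in $[\gamma,\delta]$ (namely $\gamma+(\delta-\gamma)\tfrac{\alpha-\beta+1}{\alpha+2}$ when $\alpha>\beta$, and $\centroid{\alpha}(g)$ when $\alpha<\beta$). That bound is the heart of the argument and is \emph{not} a single-crossing consequence: the paper lifts to the subgraph $L\subset\R^2$ equipped with a measure $\mu_\beta$, builds a concave auxiliary function $\varphi$ via a $\mu_\beta$-preserving rearrangement between $L$ and the triangle $L_g$, shows that $\centroid{\alpha}(h)$ is dominated by a $\varphi^{\alpha-\beta}$-weighted barycenter on $L_g$, and then applies a \emph{second} crossing argument (between $\varphi$ and a linear function) to extract the explicit constant. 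The split into $\alpha\gtrless\beta$ (your $r\gtrless1$) arises because the exponent $\alpha-\beta$ changes sign, forcing different comparison functions and, in the $\alpha<\beta$ case, a truncation to handle divergent integrals.
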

Notice that the cases $r=1$ and $r=0$ correspond to Theorem \ref{t:Grunbaum_pgeq0} and Proposition \ref{p:midpoint}, respectively.

\smallskip

Taking into account that, once a unit direction $u\in\s^{n-1}$ is fixed, the above geometric results are reduced to the study of one variable functions with certain concavity, here we deal with the corresponding functional counterpart
of these statements (from which the latter result will be obtained as a consequence of such an equivalent functional one). To this aim, first we need to define the notion of functional $\alpha$-centroid: given a non-negative function $h:[a,b]\longrightarrow[0,\infty)$ with positive integral, for any $\alpha>0$ we will write
\begin{equation}
\centroid{\alpha}(h):=\frac{\int_a^b th^\alpha(t)\,\dlat t}{\int_a^b h^\alpha(t)\,\dlat t}.
\end{equation}
Now, the statement of our main result reads as follows.

\begin{theorem}\label{t:functional_centroid_general_alpha_beta}
Let $h:[a,b]\longrightarrow[0,\infty)$ be a non-negative concave function, and let $\alpha,\beta>0$.
If $\beta\leq\alpha$ then
\begin{equation}\label{e:int_ineq_beta<alpha}
\frac{\int_{\centroid{\alpha}(h)}^bh^\beta(t)\,\dlat t} {\int_{a}^bh^\beta(t)\,\dlat t} \geq\left(\frac{\beta+1}{\alpha+2}\right)^{\beta+1},
\end{equation}
whereas if $\alpha\leq\beta$ then
\begin{equation}\label{e:int_ineq_alpha<beta}
\frac{\int_{\centroid{\alpha}(h)}^bh^\beta(t)\,\dlat t} {\int_{a}^bh^\beta(t)\,\dlat t} \geq\left(\frac{\alpha+1}{\alpha+2}\right)^{\beta+1}.
\end{equation}
\end{theorem}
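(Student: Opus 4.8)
The plan is to reduce to a one-parameter extremal problem by a symmetrization/replacement argument: among all non-negative concave functions $h$ on $[a,b]$ with a prescribed value of $\centroid{\alpha}(h)$ (normalized, say, to $0$), the ratio $\int_0^b h^\beta/\int_a^b h^\beta$ is minimized by an affine function, i.e. by (a truncation of) $h(t)=(t-c)_+$ or $h(t)=(c-t)_+$ on a suitable interval. The heuristic is the usual one for Grünbaum-type results: replacing $h$ on $[a,0]$ by the affine (``cone'') function with the same integral of $h^\alpha$ over $[a,0]$ and the same value $h(0)$ can only decrease $\int_a^0 h^\beta$ (by concavity, since $\beta\le\alpha$ pushes mass outward in the comparison, and the constraint on $h(0)$ pins the right endpoint), while the analogous replacement on $[0,b]$ can only increase $\int_0^b h^\beta$; crucially both replacements preserve the defining relation $\int_a^b t h^\alpha(t)\,\dlat t=0$ up to the monotonicity one needs. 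After both replacements one is left with a two-sided cone, and scaling reduces the problem to a single real variable — the ratio of the two slopes, or equivalently the position of the apex.

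Concretely, I would first record the scaling invariances: $\centroid{\alpha}$ and the quantity in \eqref{e:int_ineq_beta<alpha}–\eqref{e:int_ineq_alpha<beta} are unchanged under $h\mapsto\lambda h$ and under affine reparametrization of the domain, so WLOG $\centroid{\alpha}(h)=0$, $a\le 0\le b$, and (after the cone replacements) $h(t)=s_-(t-a)$ on $[a,0]$ and $h(t)=s_+(b-t)$ on $[0,b]$ with $h(0)=-s_-a=s_+b=:1$. Then $\int_a^0 h^\gamma=\frac{-a}{\gamma+1}$ and $\int_0^b h^\gamma=\frac{b}{\gamma+1}$ for any $\gamma>0$, so the centroid condition $\int_a^b t h^\alpha=0$ becomes $\frac{b^2-a^2\cdot(-1)}{\cdots}$ — more precisely it reads $\dfrac{b^2}{(\alpha+1)(\alpha+2)}=\dfrac{a^2}{(\alpha+1)(\alpha+2)}$ after integrating $t(b-t)^\alpha$ and $t(t-a)^\alpha$, which with $-s_-a=s_+b=1$ forces a single algebraic relation between $-a$ and $b$. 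Parametrizing by $x=b\in(0,1]$ (the range where $\centroid{\alpha}(h)\ge$ midpoint, the relevant side), the ratio in \eqref{e:int_ineq_beta<alpha} becomes an explicit function $x\mapsto \dfrac{b^{\beta+1}}{b^{\beta+1}+(-a)^{\beta+1}}$ with $-a$ determined by $b$ via the centroid equation; one then checks this is decreasing in the appropriate direction and evaluates the boundary case, which gives exactly $\bigl(\frac{\beta+1}{\alpha+2}\bigr)^{\beta+1}$, respectively $\bigl(\frac{\alpha+1}{\alpha+2}\bigr)^{\beta+1}$ when $\alpha\le\beta$ forces the opposite extremal configuration.

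The main obstacle is making the two replacement steps rigorous and, in particular, verifying that they interact correctly with the centroid constraint: replacing $h$ by a cone on one side changes $\int t h^\alpha$, so one cannot replace both sides ``for free'' while keeping $\centroid{\alpha}(h)=0$ fixed. The clean way around this is to not fix the constraint but instead prove two separate monotonicity lemmas — one showing $\int_{\centroid{\alpha}(h)}^b h^\beta$ only decreases under the left cone replacement (which moves $\centroid{\alpha}$ to the right, hence shrinks the interval of integration) and $\int_a^b h^\beta$ only increases, and symmetrically on the right — so that the ratio is monotone along the deformation to the two-cone extremizer regardless of how $\centroid{\alpha}$ drifts. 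The inequality $\beta\le\alpha$ versus $\alpha\le\beta$ enters exactly in the sign of a Chebyshev/rearrangement-type comparison $\int f^\beta\big/\int f^\alpha$ against its value for the cone, which is where the two different right-hand sides come from; isolating that one-variable convexity fact is the technical heart of the argument.
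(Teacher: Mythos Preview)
Your proposal has a structural problem that surfaces already in your own computation. For the two-sided tent with apex at $\centroid{\alpha}(h)=0$, namely $h(t)=s_-(t-a)$ on $[a,0]$ and $h(t)=s_+(b-t)$ on $[0,b]$ with $-s_-a=s_+b=1$, you correctly find that the centroid condition $\int_a^b t\,h^\alpha=0$ reduces to $b^2=a^2$, i.e.\ $b=-a$. But this forces the tent to be \emph{symmetric}, and then the ratio $\int_0^b h^\beta/\int_a^b h^\beta$ equals $1/2$, not $\bigl(\tfrac{\beta+1}{\alpha+2}\bigr)^{\beta+1}$ or $\bigl(\tfrac{\alpha+1}{\alpha+2}\bigr)^{\beta+1}$. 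So the tent is \emph{not} the extremizer, and your ``evaluate the boundary case'' step cannot produce the stated constants. The genuine extremal configuration (for $\alpha\le\beta$) is a single \emph{monotone} affine function $g(t)=c(\delta-t)$, for which one checks directly that the ratio equals $\bigl(\tfrac{\alpha+1}{\alpha+2}\bigr)^{\beta+1}$; for $\beta<\alpha$ the bound is only approached in a limit.

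Separately, your cone-replacement monotonicity claim is stated in the wrong direction and is in any case false as written. Take $h\equiv1$ on $[a,0]$ (this is concave, and is compatible with global concavity if $h$ is non-increasing on $[0,b]$). The cone $\tilde h$ matching $h(0)=1$ and $\int h^\alpha=-a$ lives on $[a',0]$ with $-a'=(\alpha+1)(-a)$, and $\int_{a'}^0\tilde h^\beta=\tfrac{-a'}{\beta+1}=\tfrac{\alpha+1}{\beta+1}(-a)$, which for $\beta<\alpha$ is \emph{larger} than $\int_a^0 h^\beta=-a$, not smaller. So matching $\alpha$-integrals while tracking $\beta$-integrals does not behave as you assert, and the interaction with the drifting centroid is exactly the ``main obstacle'' you flag but do not resolve.

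The paper avoids both issues by a different replacement: it passes to a \emph{single} decreasing affine function $g(t)=c(\delta-t)$ on $[\gamma,\delta]$ chosen so that $g\bigl(\centroid{\alpha}(h)\bigr)=h\bigl(\centroid{\alpha}(h)\bigr)$ and so that the $\beta$-integrals of $g$ and $h$ agree on \emph{each side} of $\centroid{\alpha}(h)$. This makes the target ratio for $g$ equal to that for $h$ by construction, with no drift to track. All the work then goes into the inequality $\centroid{\alpha}(h)\le g_0$ for an explicit $g_0=g_0(\gamma,\delta,\alpha,\beta)$; this is where the dichotomy $\beta\lessgtr\alpha$ enters, via a rearrangement of the hypograph of $h$ onto the triangle under $g$ (using Borell--Brascamp--Lieb to control the relevant distribution functions) followed by a Chebyshev-type comparison against the affine weight $(t-\gamma)^{\alpha-\beta}$ or $(\delta-t)^{\alpha-\beta}$. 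That last step, not a two-cone reduction, is the technical heart.
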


\begin{remark}
We observe that Theorem \ref{t:p_concave_r_centroid} is directly obtained from the previous result by just taking
$h=f^{p}$, $\beta=1/p$ and $\alpha=r\beta$ (where the case $r=0$ is derived when doing $\alpha\to0^+$).
Moreover, Theorem \ref{t:functional_centroid_general_alpha_beta} can be shown from Theorem \ref{t:p_concave_r_centroid} by just considering the set of revolution $K$ associated to the radius function $r=(1/\kappa_{n-1})f^{1/(n-1)}$, where $\kappa_{n-1}$ is the $(n-1)$-dimensional volume of the Euclidean unit ball in $\R^{n-1}$, $f=h^\beta$, $p=1/\beta$ and $r=\alpha/\beta$.
Therefore, in fact, both results (Theorems \ref{t:p_concave_r_centroid} and \ref{t:functional_centroid_general_alpha_beta}) are equivalent.
\end{remark}

\smallskip

We would also like to point out that, apart from the already mentioned Theorem \ref{t:p_concave_r_centroid}, and thus in particular Theorem \ref{t:Grunbaum_pgeq0} and Gr\"unbaum's inequality, both Theorems \ref{t:Minkowski_Radon} and \ref{t:Frad_Makai_Martini} can be derived as direct applications of Theorem \ref{t:functional_centroid_general_alpha_beta}.

Indeed, on the one hand, applying Theorem \ref{t:functional_centroid_general_alpha_beta} with $h=f^{1/(n-1)}$, which is concave because of \emph{Brunn's concavity principle} (see e.g. \cite[Section~1.2.1]{BrGiVaVr} and also \cite[Theorem~12.2.1]{Ma}), and taking $\alpha=n-1$ and $\beta\to0^+$, one gets
$b/(b-a)\geq1/(n+1)$, which is exactly \eqref{e:Minkowski_Radon}.

\smallskip

On the other hand, applying Theorem \ref{t:functional_centroid_general_alpha_beta} with $h=f^{1/(n-1)}$ and $\alpha=n-1$, and then raising both sides of \eqref{e:int_ineq_alpha<beta} to the power $1/\beta$ and taking $\beta\to\infty$, one has
\begin{equation}\label{e:deriving_Frad}
\left(\frac{\max_{t\in[0,b]} f(t)}{\|f\|_{\infty}}\right)^{1/(n-1)}\geq \frac{n}{n+1}.
\end{equation}
Since we may assume without loss of generality that $\|f\|_{\infty}>\max_{t\in[0,b]} f(t)$ (considering otherwise the function $t\mapsto f(-t)$), we then get $\max_{t\in[0,b]} f(t)=f(0)$ (since $f$ is $(1/(n-1))$-concave and thus quasi-concave), and therefore \eqref{e:deriving_Frad} is nothing but \eqref{e:Frad_Makai_Martini}.

\smallskip

The paper is organized as follows. In Section \ref{s:background} we collect some preliminaries and background, and we show some first results, such as Propositions \ref{p:midpoint} and \ref{p:first_parcial_result}. The remaining section of the paper, Section \ref{s:proof}, is devoted to the proof of Theorem \ref{t:functional_centroid_general_alpha_beta}, which is also divided into another two subsections, one for each of both cases that are distinguished in Theorem \ref{t:functional_centroid_general_alpha_beta} (namely, $\beta<\alpha$ and $\alpha<\beta$).

\smallskip

\section{Preliminaries and first results}\label{s:background}

We recall that a function $\varphi:\R^n\longrightarrow\R_{\geq0}$ is
$p$-concave, for $p\in\R\cup\{\pm\infty\}$, if
\begin{equation*}\label{e:p-concavecondition}
\varphi\bigl((1-\lambda)x+\lambda y\bigr)\geq
\bigl((1-\lambda)\varphi(x)^p+\lambda \varphi(y)^p\bigr)^{1/p}
\end{equation*}
for all $x,y\in\R^n$ such that $\varphi(x)\varphi(y)>0$ and any $\lambda\in(0,1)$, where the cases $p=0$, $p=\infty$ and $p=-\infty$ must be understood as the corresponding expressions that are obtained by continuity, namely, the geometric mean, the maximum and the minimum (of $\varphi(x)$ and $\varphi(y)$), respectively. 
Note that if $p>0$, then $\varphi$ is $p$-concave if and only if $\varphi^p$ is concave on its support
$\{x\in\R^n: \varphi(x)>0\}$ and thus, in particular, $1$-concave is just concave (on its support) in the usual sense.
A $0$-concave function is usually called
\emph{log-concave} whereas a $(-\infty)$-concave function is referred to as \emph{quasi-concave}.
Moreover, Jensen's inequality for means (see e.g. \cite[Section~2.9]{HaLiPo} and \cite[Theorem~1 p.~203]{Bu}) implies that a $q$-concave function is also $p$-concave, whenever $q>p$.

\smallskip

For the sake of simplicity, in the following we consider
$H=\{x\in\R^n: \esc{x,u}=0\}$, for a given direction $u\in\s^{n-1}$ that
we extend to an orthonormal basis $(u_1,u_2,\dots,u_n)$ of $\R^n$, with $u_1=u$.
Given a set $M\subset\R^n$, and a real number $c\in\R$, we will write $M^-(u,c)=M\cap\{x\in\R^n: \esc{x,u}\leq c\}$ and
$M^+(u,c)=M\cap\{x\in\R^n: \esc{x,u}\geq c\}$. When 
dealing with the above-mentioned prescribed hyperplane $H$ we will just write $H^-$ and $H^+$ for the corresponding halfspaces determined by $H$.
Moreover, for a compact set with non-empty interior $K\subset\R^n$, we denote by $K(t)=K\cap(tu+H)$ for any $t\in\R$. We notice that, if $K|H^\bot\subset[au, bu]$, Fubini's theorem
applied to the function $f:\R\longrightarrow\R_{\geq0}$ given by \[f(t)=\vol_{n-1}\bigl(K\cap(tu+H)\bigr)\]
yields (provided that $a\leq0$) 
\begin{equation}\label{e:vol(K)_Fub}
\vol(K)=\int_a^b f(t)\,\dlat t
\quad \text{and} \quad \vol(K^-)=\int_a^0 f(t)\,\dlat t,
\end{equation}
where, as usual, we are identifying the linear subspace spanned by $u$ with $\R$.
Since the set $\{t\in\R: f(t)>0\}$ is convex whenever $f$ is quasi-concave, from now on we will assume, without loss of generality, that $f(t)>0$ for all $t\in(a,b)$. Furthermore, by Fubini's theorem, we get
\begin{equation}\label{e:g(K)_first_comp}
[\g(K)]_1=\frac{1}{\vol(K)}\int_a^b tf(t)\,\dlat t
\end{equation}
and thus, in particular, $a<[\g(K)]_1<b$ (cf. \eqref{e:vol(K)_Fub}).

It seems natural to wonder about the possibility of getting an analogue of Theorem \ref{t:Grunbaum_pgeq0} by considering the midpoint in the direction $u$ of $K$ instead its centroid, that is, the point $\bigl[(a+b)/2\bigr]\cdot u$.
This is the content of the following result.

\begin{proposition}\label{p:midpoint}
Let $K\subset\R^n$ be a compact set with non-empty interior and with midpoint, with respect to some direction $u\in\s^{n-1}$, at the origin. Let $H=\{x \in \R^{n}: \esc{x,u}=0\}$ be the hyperplane with normal vector $u$ and assume that the function $f:H^{\bot}\longrightarrow\R_{\geq0}$ given by $f(x)=\vol_{n-1}\bigl(K\cap(x+H)\bigr)$ is $p$-concave, for some $p\in(0,\infty)$. Then
\begin{equation*}
\frac{\vol(K^{-})}{\vol(K)}\geq\left(\frac{1}{2}\right)^{(p+1)/p}.
\end{equation*}
\end{proposition}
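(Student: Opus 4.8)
The plan is to reduce Proposition \ref{p:midpoint} to a one-dimensional statement about the cross-section function and then apply the same variational/comparison strategy that underlies Theorem \ref{t:Grunbaum_pgeq0}. Concretely, after fixing the direction $u$ and writing $f(t)=\vol_{n-1}\bigl(K\cap(tu+H)\bigr)$ with support $[a,b]$, the hypothesis that the midpoint in direction $u$ is at the origin means exactly that $a=-b$, i.e. $b=-a$ and the support is symmetric about $0$. By \eqref{e:vol(K)_Fub} the quantity to be bounded below is $\int_a^0 f(t)\,\dlat t\big/\int_a^b f(t)\,\dlat t$. Since $f$ is $p$-concave for $p\in(0,\infty)$, the function $g:=f^p$ is concave and non-negative on $[a,b]=[-b,b]$, and the ratio becomes $\int_{-b}^{0} g^{1/p}(t)\,\dlat t\big/\int_{-b}^{b} g^{1/p}(t)\,\dlat t$.

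Next I would run the standard Gr\"unbaum-type reduction: among all concave $g\ge 0$ on $[-b,b]$ with a prescribed value of the ratio $\int_{-b}^0 g^{1/p}\big/\int_{-b}^b g^{1/p}$, the infimum of $\int_{-b}^0 g^{1/p}$ normalized appropriately is attained by a linear function on one side. More precisely, one shows that replacing $g$ on each of the intervals $[-b,0]$ and $[0,b]$ by the affine function with the same endpoint values and the same integral of $g^{1/p}$ only makes the relevant ratio smaller (or leaves it unchanged) — this is the same concavity-truncation argument used for Theorem \ref{t:Grunbaum_pgeq0}, where the extremal shape is a ``truncated cone,'' i.e. $g^{1/p}$ affine and vanishing at one end. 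Carrying this out, the worst case is $f$ of the form $f(t)=c\,(b-t)_+^{1/p}$ (up to normalization and a reflection), for which both integrals are explicit: $\int_{-b}^{b}(b-t)^{1/p}\,\dlat t$ and $\int_{-b}^{0}(b-t)^{1/p}\,\dlat t$, whose ratio evaluates to $1-2^{-(p+1)/p}$... but one must be careful: the extremal configuration respecting the \emph{symmetric support} constraint $a=-b$ is the one where the full cone sits over $[-b,b]$ and the hyperplane at $0$ is the midpoint of the base, giving exactly $\bigl(1/2\bigr)^{(p+1)/p}$ after the computation $\int_{-b}^{0}(b-t)^{1/p}\,\dlat t\big/\int_{-b}^{b}(b-t)^{1/p}\,\dlat t$. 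I would verify this elementary integral identity directly.

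The cleanest route, and the one I would actually write, is to deduce Proposition \ref{p:midpoint} as the limiting case $\alpha\to 0^+$ of Theorem \ref{t:functional_centroid_general_alpha_beta} (equivalently $r=0$ in Theorem \ref{t:p_concave_r_centroid}), since the excerpt already announces this correspondence. With $h=f^{p}$ concave, $\beta=1/p$, and $\alpha\to 0^+$, one has $\centroid{\alpha}(h)\to (a+b)/2$, the midpoint; and \eqref{e:int_ineq_beta<alpha}, which applies once $\alpha\le\beta$ (automatic as $\alpha\to0^+$), gives $\int_{(a+b)/2}^{b}h^{\beta}\big/\int_a^b h^{\beta}\ge\bigl((\beta+1)/(\alpha+2)\bigr)^{\beta+1}\to\bigl((\beta+1)/2\bigr)^{\beta+1}$. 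Wait — that exponent is wrong for the midpoint bound; instead one should use \eqref{e:int_ineq_alpha<beta} in the limit, which yields $\bigl((\alpha+1)/(\alpha+2)\bigr)^{\beta+1}\to (1/2)^{\beta+1}=(1/2)^{(p+1)/p}$, and by symmetry of the roles of $K^-$ and $K^+$ one gets the same bound for $\vol(K^-)/\vol(K)$. So the proof is: apply the already-established machinery with these parameter choices and pass to the limit, translating $h^\beta$ back to $f$ and $(a+b)/2$ back to the origin via the midpoint hypothesis.

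The main obstacle is purely one of exposition order: Proposition \ref{p:midpoint} is stated in Section \ref{s:background}, \emph{before} Theorem \ref{t:functional_centroid_general_alpha_beta} is proved, so one cannot honestly invoke the limiting argument there. Hence the self-contained proof must redo the concavity-truncation step from scratch for the midpoint. The only genuinely delicate point in that direct argument is justifying the reduction to the extremal ``cone'' shape while respecting the fixed symmetric support $[-b,b]$ — one needs to check that the truncation operation does not move the support endpoints, so that the midpoint constraint is preserved, and that it can only decrease the target ratio; after that the computation of $\int_{-b}^{0}(b-t)^{1/p}\,\dlat t\big/\int_{-b}^{b}(b-t)^{1/p}\,\dlat t=(1/2)^{(p+1)/p}$ is a routine one-line integration. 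I expect no surprises beyond bookkeeping.
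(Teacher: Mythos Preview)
Your instinct to pass to a comparison cone is correct, but the step you single out as ``the only genuinely delicate point'' is precisely where the outline breaks down. You write that ``one needs to check that the truncation operation does not move the support endpoints, so that the midpoint constraint is preserved.'' In fact the standard cone replacement from \cite{MSYN} that you are invoking \emph{does} move the support: it produces a $p$-affine function $g_p(t)=c(t+\gamma)^{1/p}$ on a new interval $[-\gamma,\delta]$ matching $f(0)$ and the two half-integrals $\int_a^0 f$, $\int_0^b f$, together with the endpoint relations $-\gamma\le a$ and $\delta\le b$. There is no mechanism in general to force the comparison cone onto exactly $[-b,b]$ while also keeping both half-integrals equal to those of $f$; a cone on a prescribed interval has only one free parameter. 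So the reduction you describe, with fixed support, cannot be carried out as stated.

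The paper's proof does not try to preserve the support; it exploits the way the support moves. Since the half-integrals are matched, the ratio $\vol(K^-)/\vol(K)$ equals $\int_{-\gamma}^0 g_p\big/\int_{-\gamma}^\delta g_p$ exactly. Now the midpoint hypothesis $a=-b$ combined with $-\gamma\le a$ and $\delta\le b$ gives $-\gamma+\delta\le 0$, i.e.\ the midpoint of the \emph{new} interval lies at or to the left of $0$. Hence
\[
\frac{\int_{-\gamma}^0 g_p}{\int_{-\gamma}^\delta g_p}\ \ge\ \frac{\int_{-\gamma}^{(-\gamma+\delta)/2} g_p}{\int_{-\gamma}^\delta g_p}\ =\ \left(\frac{(\gamma+\delta)/2}{\gamma+\delta}\right)^{(p+1)/p}=\left(\frac12\right)^{(p+1)/p}.
\]
So the midpoint constraint is not preserved under the replacement; it is relaxed in the favourable direction, and that is all that is needed. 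A minor additional slip: your closing ``one-line integration'' uses the decreasing cone $(b-t)^{1/p}$ on $[-b,b]$, for which $\int_{-b}^{0}(b-t)^{1/p}\,\dlat t\big/\int_{-b}^{b}(b-t)^{1/p}\,\dlat t=1-(1/2)^{(p+1)/p}$, not $(1/2)^{(p+1)/p}$; the minimizing configuration for $\vol(K^-)/\vol(K)$ is the increasing cone $(t+b)^{1/p}$.
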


\begin{proof}
In the proof of \cite[Theorem~1.1]{MSYN} it is shown that
there exists a $p$-affine function $g_p:[-\gamma,\delta]\longrightarrow\R_{\geq0}$ given by
$g_p(t)=c(t+\gamma)^{1/p}$, for some $\gamma,\delta,c>0$,
such that $g_p(0)=f(0)$,
\begin{equation*}\label{e:eq_integ_f_g_p>0}
\int_{-\gamma}^0 g_p(t)\,\dlat t=\int_a^0 f(t)\,\dlat t \quad \text{and} \quad
\int_0^\delta g_p(t)\,\dlat t=\int_0^b f(t)\,\dlat t,
\end{equation*}
and further that $-\gamma\leq a<0<\delta\leq b$.

Here, since $K$ has its midpoint (w.r.t. $u$) at the origin, we have that $a=-b$ and then we get $-\gamma+\delta\leq0$. Thus
\[\frac{\vol(K^-)}{\vol(K)}=\frac{\int_{-\gamma}^0 g_p(t)\,\dlat t}{\int_{-\gamma}^\delta g_p(t)\,\dlat t}
\geq\frac{\int_{-\gamma}^{(-\gamma+\delta)/2} g_p(t)\,\dlat t}{\int_{-\gamma}^\delta g_p(t)\,\dlat t}
=\left(\frac{1}{2}\right)^{(p+1)/p},
\]
as desired.
\end{proof}

We collect here the following result, originally proved in
\cite{Borell} and \cite{BL} (see also \cite{G} for a detailed presentation), which can be regarded as the
functional counterpart of the \emph{Brunn-Minkowski inequality}.
\begin{thm}[The Borell-Brascamp-Lieb inequality]\label{t:BBL}
Let $\lambda\in(0,1)$. Let $-1/n\leq p\leq\infty$ and let $f,g,h:\R^n\longrightarrow\R_{\geq0}$ be measurable
functions with positive integrals such that
\begin{equation*}
h((1-\lambda)x + \lambda y)\geq \bigl((1-\lambda)f(x)^p+\lambda g(y)^p\bigr)^{1/p}
\end{equation*}
for all $x,y\in\R^n$ with $f(x)g(y)>0$. Then
\begin{equation}\label{e:BBL}
\int_{\R^n}h(x)\,\dlat x\geq
\left[(1-\lambda)\left(\int_{\R^n}f(x)\,\dlat x\right)^q+\lambda\left(\int_{\R^n}g(x)\,\dlat x\right)^q\right]^{1/q},
\end{equation}
where $q=p/(np+1)$.
\end{thm}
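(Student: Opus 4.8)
The plan is to prove the inequality by induction on the dimension $n$, following the classical slicing scheme (see \cite{G}); the substance of the argument lies in the one-dimensional case.

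\emph{Base case $n=1$.} Here $q=p/(p+1)$, and by a routine reduction we may assume that $f$ and $g$ are bounded, supported on compact intervals, and that $A:=\int_{\R}f$ and $B:=\int_{\R}g$ are positive. Consider the increasing quantile functions $u,v:[0,1]\longrightarrow\R$ determined by $\int_{-\infty}^{u(t)}f=tA$ and $\int_{-\infty}^{v(t)}g=tB$, so that (in the well-behaved situation) $f\bigl(u(t)\bigr)\,u'(t)=A$ and $g\bigl(v(t)\bigr)\,v'(t)=B$, and set $w(t)=(1-\lambda)u(t)+\lambda v(t)$, which is again increasing. The change of variables $y=w(t)$ together with the hypothesis on $h$ gives
\[
\int_{\R}h(y)\,\dlat y\ \geq\ \int_0^1 h\bigl(w(t)\bigr)\,w'(t)\,\dlat t\ \geq\ \int_0^1\Bigl((1-\lambda)f\bigl(u(t)\bigr)^p+\lambda g\bigl(v(t)\bigr)^p\Bigr)^{1/p}\Bigl((1-\lambda)\tfrac{A}{f(u(t))}+\lambda\tfrac{B}{g(v(t))}\Bigr)\dlat t,
\]
so it suffices to prove the pointwise estimate
\[
\bigl((1-\lambda)a^p+\lambda b^p\bigr)^{1/p}\Bigl((1-\lambda)\tfrac{A}{a}+\lambda\tfrac{B}{b}\Bigr)\ \geq\ \bigl((1-\lambda)A^q+\lambda B^q\bigr)^{1/q}
\]
for all $a,b,A,B>0$. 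Writing $A=a\cdot(A/a)$ and $B=b\cdot(B/b)$, this is precisely Hölder's inequality for weighted power means applied to the pairs $(a,b)$ and $(A/a,B/b)$ with exponents $p$ and $1$ (since $1/q=1/p+1$); it holds because the hypothesis $p\geq-1$ forces $q\leq p$. Integrating the right-hand side, which is constant in $t$, over $[0,1]$ finishes the base case. The endpoint exponents $p\in\{0,\infty\}$, and $p=-1$ with $q=-\infty$, are covered by the same scheme, replacing Hölder's inequality by AM--GM, by $\max\{a,b\}\geq a,b$, and by $(1-\lambda)A/a+\lambda B/b\geq\min\{A,B\}\bigl((1-\lambda)/a+\lambda/b\bigr)$, respectively.

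\emph{Inductive step.} Assume the inequality holds in dimension $n-1$ and let $f,g,h:\R^n\longrightarrow\R_{\geq0}$ satisfy the hypotheses with $p\geq-1/n$. Write $x=(s,\bar x)\in\R\times\R^{n-1}$ and set $F(s)=\int_{\R^{n-1}}f(s,\bar x)\,\dlat\bar x$, and analogously $G(s)$ and $H(s)$. Fix $s_0,s_1\in\R$ with $F(s_0),G(s_1)>0$ and put $s_\lambda=(1-\lambda)s_0+\lambda s_1$. For every $\bar x_0,\bar x_1\in\R^{n-1}$ with $f(s_0,\bar x_0)g(s_1,\bar x_1)>0$, the hypothesis at the points $(s_0,\bar x_0)$ and $(s_1,\bar x_1)$ yields
\[
h\bigl(s_\lambda,(1-\lambda)\bar x_0+\lambda\bar x_1\bigr)\ \geq\ \Bigl((1-\lambda)f(s_0,\bar x_0)^p+\lambda g(s_1,\bar x_1)^p\Bigr)^{1/p},
\]
whence, since $p\geq-1/n>-1/(n-1)$, the induction hypothesis in $\R^{n-1}$ gives $H(s_\lambda)\geq\bigl[(1-\lambda)F(s_0)^{q'}+\lambda G(s_1)^{q'}\bigr]^{1/q'}$ with $q'=p/\bigl((n-1)p+1\bigr)$. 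A direct computation shows that $p\geq-1/n$ is exactly the condition guaranteeing $q'\geq-1$, so the base case applies to $F,G,H$ with exponent $q'$ and yields $\int_{\R}H\geq\bigl[(1-\lambda)\bigl(\int_{\R}F\bigr)^{q''}+\lambda\bigl(\int_{\R}G\bigr)^{q''}\bigr]^{1/q''}$ with $q''=q'/(q'+1)$. By Fubini's theorem these three integrals equal $\int_{\R^n}h$, $\int_{\R^n}f$ and $\int_{\R^n}g$, and a short algebraic check gives $q''=p/(np+1)=q$, which completes the induction.

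\emph{Main obstacle.} The real work is confined to the one-dimensional base case: justifying the change of variables for merely measurable $f,g$ (hence the preliminary reduction to data for which the quantile maps are absolutely continuous, carried out so as not to spoil the hypothesis on $h$ after passing to the limit), and checking the pointwise power-mean inequality uniformly over the admissible range $p\in[-1,\infty]$. Granted the base case, the slicing, the bookkeeping of the auxiliary exponents $q'$ and $q''$, and the appeals to Fubini are entirely routine.
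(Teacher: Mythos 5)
The paper does not prove Theorem~\ref{t:BBL}; it is recalled as a classical result with citations to Borell, Brascamp--Lieb, and Gardner's survey. So there is no ``paper proof'' to compare against, and your task amounts to reconstructing the standard argument. What you wrote is, in substance, the proof presented in Gardner's survey \cite{G} (the very reference the paper points to as a ``detailed presentation''): induction on the dimension via slicing, with the one-dimensional case handled by the transport (quantile) change of variables and a pointwise power-mean H\"older lemma.

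Your reconstruction is essentially correct. The exponent bookkeeping is right: $1/q=1/p+1$ in dimension one; in the inductive step $q'=p/((n-1)p+1)$, and $p\geq-1/n$ is precisely equivalent to $q'\geq-1$ (since $(n-1)p+1\geq 1/n>0$), so the one-dimensional case applies to $F,G,H$ with exponent $q'$; and $q''=q'/(q'+1)=p/(np+1)=q$, as claimed. The pointwise inequality
\[
\bigl((1-\lambda)a^p+\lambda b^p\bigr)^{1/p}\Bigl((1-\lambda)\tfrac{A}{a}+\lambda\tfrac{B}{b}\Bigr)\ \geq\ \bigl((1-\lambda)A^q+\lambda B^q\bigr)^{1/q}
\]
is the standard power-mean H\"older lemma for the exponent pair $(p,1)$, valid exactly when $p+1\geq 0$; your heuristic ``$p\geq-1$ forces $q\leq p$'' is a consequence of, rather than the reason for, that lemma, but the invocation itself is correct. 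Two points you flag but do not execute are genuine (if routine) technicalities: the measure-theoretic reduction that makes the quantile maps absolutely continuous with $f(u(t))u'(t)=A$ a.e.\ without destroying the hypothesis on $h$, and the verification, needed before applying the induction hypothesis, that for each fixed $s_0,s_1$ with $F(s_0)G(s_1)>0$ the fiber functions have positive integrals and inherit the one-sided bound on $h$. Granting those, the argument is the canonical one and is sound.
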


\smallskip

Now, considering the points $\centroid{r}\cdot u$, for any $r>0$, where $\centroid{r}$ is given by \eqref{e:lambda_r},
and following the same idea as in \cite[Theorem~3]{Fr}, we can get a first result concerning this family of points. The statement reads as follows.

\begin{corollary}\label{c:first_parcial_result}
Let $r\in(0,\infty)$ and let $K\subset\R^n$ be a compact set with non-empty interior having the point $\centroid{r}\cdot u$, with respect to some direction $u\in\s^{n-1}$, at the origin. Let $H=\{x \in \R^{n}: \esc{x,u}=0\}$ be the hyperplane with normal vector $u$ and assume that the function $f:\R\longrightarrow\R_{\geq0}$ given by $f(t)=\vol_{n-1}\bigl(K\cap(tu+H)\bigr)$ is $p$-concave, for some $p\in(0,\infty)$. Then
\begin{equation}\label{e:first_parcial_result}
\frac{\vol(K^{-})}{\vol(K)} \geq \left(\frac{p}{2p + r}\right)^{(p+1)/p}.
\end{equation}
\end{corollary}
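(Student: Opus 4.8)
The plan is to reduce to the one-variable picture and then conclude in two moves: first I would use the hypothesis on $\centroid{r}$ to bound the ratio of the two ``radii'' of the support of $f$, and then I would estimate $\vol(K^{-})$ and $\vol(K^{+})$ separately by pointwise concavity bounds. For the set-up, as usual one may assume $f>0$ on the open interval $(a,b)$, so that $f^{p}$ is concave there; since $\centroid{r}$ is an $f^{r}$-weighted average of the points of $[a,b]$ it lies in $(a,b)$, so the hypothesis that $\centroid{r}\cdot u$ is the origin forces $a<0<b$ and, by \eqref{e:lambda_r},
\[
\int_{0}^{b}t\,f^{r}(t)\,\dlat t=\int_{a}^{0}(-t)\,f^{r}(t)\,\dlat t .
\]
One also notes that $f(0)>0$ (a non-negative concave function on $(a,b)$ vanishing at the interior point $0$ vanishes identically), and recalls from \eqref{e:vol(K)_Fub} that $\vol(K^{-})=\int_{a}^{0}f$, $\vol(K^{+})=\int_{0}^{b}f$ and $\vol(K)=\int_{a}^{b}f$.

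\emph{Step 1: the support inequality $b\le\frac{p+r}{p}(-a)$.} On $[0,b]$, concavity of $f^{p}$ (using $f^{p}(b)\ge 0$) gives $f(t)\ge f(0)(1-t/b)^{1/p}$; substituting $w=t/b$ and using the Beta integral $\int_{0}^{1}w(1-w)^{r/p}\,\dlat w=\frac{p^{2}}{(p+r)(2p+r)}$ one obtains $\int_{0}^{b}tf^{r}\ge\frac{p^{2}}{(p+r)(2p+r)}f(0)^{r}b^{2}$. On $[a,0]$, exploiting now that the origin lies between $t$ and $b$, the same concavity of $f^{p}$ (again with $f^{p}(b)\ge 0$) gives the \emph{reverse} pointwise bound $f(t)\le f(0)(1+|t|/b)^{1/p}$, whence, after $w=-t/b$,
\[
\int_{a}^{0}(-t)\,f^{r}(t)\,\dlat t\ \le\ f(0)^{r}b^{2}\int_{0}^{-a/b}w(1+w)^{r/p}\,\dlat w .
\]
Now the elementary identity $\int_{0}^{p/(p+r)}w(1+w)^{r/p}\,\dlat w=\frac{p^{2}}{(p+r)(2p+r)}$ (substitute $u=1+w$; the resulting antiderivative vanishes at the upper limit $u=\frac{2+r/p}{1+r/p}$), together with the strict monotonicity of $\rho\mapsto\int_{0}^{\rho}w(1+w)^{r/p}\,\dlat w$, turns the displayed centroid identity into $-a/b\ge p/(p+r)$, i.e.\ $b\le\frac{p+r}{p}(-a)$.

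\emph{Step 2: conclusion.} On $[a,0]$, concavity of $f^{p}$ (using $f^{p}(a)\ge 0$) gives the cone bound $f(t)\ge f(0)\bigl((t-a)/(-a)\bigr)^{1/p}$, so $\vol(K^{-})\ge\frac{p}{p+1}(-a)f(0)$; the mirror estimate $f(t)\le f(0)\bigl(1+t/(-a)\bigr)^{1/p}$ on $[0,b]$ gives $\vol(K^{+})\le\frac{p}{p+1}(-a)f(0)\bigl[(1+b/(-a))^{(p+1)/p}-1\bigr]$. Since $x\mapsto x/(x+y)$ is non-decreasing and $y\mapsto x/(x+y)$ is non-increasing, bounding $\vol(K^{-})$ from below and $\vol(K^{+})$ from above in $\vol(K^{-})/\vol(K)=\vol(K^{-})/\bigl(\vol(K^{-})+\vol(K^{+})\bigr)$ yields $\vol(K^{-})/\vol(K)\ge(1+b/(-a))^{-(p+1)/p}$, and Step 1 (which gives $1+b/(-a)\le(2p+r)/p$) then gives $\vol(K^{-})/\vol(K)\ge\bigl(p/(2p+r)\bigr)^{(p+1)/p}$.

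\emph{The main obstacle.} I expect the delicate point to be the \emph{reverse} pointwise estimate $f(t)\le f(0)(1+|t|/b)^{1/p}$ on $[a,0]$ in Step 1: $f$ need not be monotone on $[a,0]$, so one cannot simply bound $f(t)$ by $f(0)$, and one has to turn concavity of $f^{p}$ into an \emph{upper} bound by using that $0$ separates $t$ from $b$. Once this is in place, everything hinges on the Beta-function identity, which is exactly what makes the two sides of the centroid identity balance at the sharp ratio $p/(p+r)$; the remaining estimates are routine. (Alternatively, once $b\le\frac{p+r}{p}(-a)$ is known, Step 2 can be replaced by the $p$-affine reduction used in the proof of Proposition \ref{p:midpoint}, where $\vol(K^{-})/\vol(K)=(\gamma/(\gamma+\delta))^{(p+1)/p}$ with $\gamma\ge -a$ and $\delta\le b$, which again gives the claim.)
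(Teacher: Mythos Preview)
Your proof is correct and takes a genuinely different route from the paper's.

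The paper derives Corollary~\ref{c:first_parcial_result} from Proposition~\ref{p:first_parcial_result}, which is an abstract statement proved via Jensen's inequality and the Borell--Brascamp--Lieb inequality: one applies that proposition on the interval $[a,b]$ with the $(p/r)$-concave density $\bar f=f^{r}$ and the concave function $g(t)=\bigl(\int_{a}^{t}f\bigr)^{p/(p+1)}$ (concavity of $g$ again coming from BBL). The bound then drops out immediately from $g(0)\geq\bigl(p/(2p+r)\bigr)\|g\|_{\infty}$.

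Your argument, by contrast, is entirely elementary---no Jensen, no BBL---and proceeds in two distinct moves. In Step~1 you convert the centroid condition $\centroid{r}=0$ into a purely geometric constraint on the support, $b\le\frac{p+r}{p}(-a)$, by playing the \emph{lower} cone bound $f(t)\ge f(0)(1-t/b)^{1/p}$ on $[0,b]$ against the \emph{upper} secant bound $f(t)\le f(0)(1+|t|/b)^{1/p}$ on $[a,0]$ (the point being that $0$ separates $t$ from $b$, so concavity of $f^{p}$ yields an upper bound). The matching of the two sides via the Beta identity $\int_{0}^{p/(p+r)}w(1+w)^{r/p}\,\dlat w=\frac{p^{2}}{(p+r)(2p+r)}$ is what makes the threshold $p/(p+r)$ appear. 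In Step~2 you repeat the same pair of pointwise bounds with the roles of $a$ and $b$ exchanged, obtaining $\vol(K^{-})\ge A$ and $\vol(K^{+})\le B$ with $A/(A+B)=(1+b/(-a))^{-(p+1)/p}$; the monotonicity of $x/(x+y)$ in each variable is exactly what is needed to combine a lower bound on the numerator with an upper bound on the ``other'' summand.

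What each approach buys: the paper's route is short and conceptual, and its Proposition~\ref{p:first_parcial_result} is stated in $\R^{n}$, so it has independent interest. Your route avoids the Borell--Brascamp--Lieb machinery altogether and isolates an intermediate fact---the support inequality $b\le\frac{p+r}{p}(-a)$---that is not visible in the paper's proof and may be useful in its own right (e.g.\ it immediately feeds into the $p$-affine reduction of Proposition~\ref{p:midpoint}, as you note in your alternative ending). The ``main obstacle'' you flag is real but you handle it correctly: the upper bound on $[a,0]$ follows from concavity of $f^{p}$ because $0$ lies between $t$ and $b$.
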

We will derive Corollary \ref{c:first_parcial_result} as a simple application of the following (slightly more general) functional result.
\begin{proposition}\label{p:first_parcial_result}
Let $K \subset \R^n$ be a convex body. Let $g : K \longrightarrow \R_{\geq 0}$ be a concave function and let $f : K \longrightarrow \R_{\geq 0}$ be a $p$-concave function, with $p>0$. Then
\begin{equation*}
g\left(\frac{\int_{K} x f(x) \,\dlat x}{\int_{K} f(x) \,\dlat x}\right) \geq \left(\frac{p}{(n + 1)p + 1}\right)\|g\|_{\infty}.
\end{equation*}
\end{proposition}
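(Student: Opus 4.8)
The plan is to reduce the multidimensional statement to a one-dimensional estimate by integrating over level sets of the concave function $g$, and then to exploit the $p$-concavity of $f$ together with the Borell--Brascamp--Lieb inequality (Theorem \ref{t:BBL}). Write $b=\centroid{r}(f)$ in the sense of the barycenter with weight $f$, i.e. $b=\int_K xf(x)\,\dlat x\big/\int_K f(x)\,\dlat x$, and set $M=\|g\|_\infty$. After normalizing so that $\int_K f=1$ we must show $g(b)\geq pM/\bigl((n+1)p+1\bigr)$. Since $g$ is concave and nonnegative on $K$, its superlevel sets $K_s:=\{x\in K: g(x)\geq s\}$ for $s\in[0,M)$ are nested convex bodies, and by the layer-cake/coarea formula applied to $g$ one has, for the weighted measure $f\,\dlat x$,
\[
g(b)\ \geq\ \int_0^{M}\biggl(\int_{K_s}f(x)\,\dlat x\biggr)\,\dlat s\Big/\int_K f(x)\,\dlat x
\]
after one checks, via Jensen's inequality for the concave function $g$ against the probability measure $f\,\dlat x/\int_K f$, that $g(b)\geq\int_K g(x)f(x)\,\dlat x\big/\int_K f(x)\,\dlat x$, and the latter equals $\int_0^M\phi(s)\,\dlat s$ where $\phi(s):=\int_{K_s}f(x)\,\dlat x$ (again normalizing $\int_K f=1$). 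So it suffices to lower bound $\int_0^M\phi(s)\,\dlat s$.

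The key step is to control the decay of $\phi$. Following the idea in \cite[Theorem~3]{Fr}: because $g$ is concave, for $0\le s\le M$ we have a Minkowski-type inclusion relating $K_s$, $K_0=K$ and the "top" level set; concretely, writing $t=s/M\in[0,1]$, concavity of $g$ gives $(1-t)K_0+tK_M\subseteq K_{(1-t)\cdot 0+t\cdot M}=K_s$ whenever $K_M\neq\emptyset$, and more robustly one gets the scaling $K_s\supseteq (1-t)x_0 + t K$ is false in general, so instead I would argue at the level of the function $\phi$ directly. Since $f$ is $p$-concave on $K$, the Borell--Brascamp--Lieb inequality shows that the function $s\mapsto \phi(s)$ is $q$-concave on $[0,M)$ with $q=p/(np+1)$ — this is exactly the statement that the weighted measure of superlevel sets of a concave function against a $p$-concave density inherits the BBL exponent. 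Hence $\phi^q$ is concave on $[0,M)$, $\phi$ is nonincreasing, and $\phi(0)=1$.

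Given that $\psi:=\phi^q$ is concave, nonnegative, nonincreasing on $[0,M)$ with $\psi(0)=1$, the integral $\int_0^M\phi\,\dlat s=\int_0^M\psi^{1/q}\,\dlat s$ is minimized, for fixed "mass budget", by the extremal affine profile $\psi(s)=\max\{0,1-s/\ell\}$ supported on some $[0,\ell]$ with $\ell\le M$; the normalization $\int_K f=1$ forces $\int_0^M\phi\le 1$, which pins down the worst case to be exactly $\phi(s)=(1-s/M)_+^{1/q}$ (the whole budget concentrated so that the level sets shrink as slowly as allowed). For this profile,
\[
\int_0^M\phi(s)\,\dlat s \;=\; M\int_0^1 (1-t)^{1/q}\,\dlat t \;=\; \frac{M}{\,1/q+1\,} \;=\; \frac{qM}{q+1}.
\]
Substituting $q=p/(np+1)$ gives $q/(q+1)=p/\bigl((n+1)p+1\bigr)$, which is exactly the claimed constant, so $g(b)\ge \int_0^M\phi\ge pM/\bigl((n+1)p+1\bigr)$.

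The main obstacle I anticipate is making the extremality/rearrangement step fully rigorous: one must justify that among all nonincreasing $\phi$ with $\phi^q$ concave, $\phi(0)=1$, and $\int_0^{\operatorname{supp}}\phi\le 1$, the minimum of $\int\phi$ is achieved at the affine-$\phi^q$ profile with total mass exactly $1$; this requires a short convexity/variational argument (or an explicit comparison with the chord of the concave function $\phi^q$), and one must also be careful about the case $K_M=\emptyset$ versus $g$ attaining its supremum, and about the degenerate situation where $g$ is constant (then $g(b)=M$ trivially and the bound is immediate). The passage $g(b)\ge\int_K gf/\int_K f$ via Jensen is routine, and the $q$-concavity of $\phi$ is a direct application of Theorem \ref{t:BBL} to the functions $x\mapsto f(x)\mathbf 1_{K_{s_i}}(x)$, noting $K_{(1-\lambda)s_0+\lambda s_1}\supseteq(1-\lambda)K_{s_0}+\lambda K_{s_1}$ by concavity of $g$.
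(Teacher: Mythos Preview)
Your approach is essentially the same as the paper's: Jensen's inequality for the concave $g$ against the probability measure $f\,\dlat x/\int_K f$, the layer-cake identity to write $\int_K g\,\dlat\mu=\int_0^M\phi(s)\,\dlat s$, then Borell--Brascamp--Lieb to obtain $q$-concavity of $\phi$ with $q=p/(np+1)$, and finally integration of a pointwise lower bound on $\phi$. The only divergence is in how you justify this last step, and here your writeup is muddled. The ``mass budget'' constraint $\int_0^M\phi\le 1$ is both incorrect as stated (one only has $\int_0^M\phi\le M$, since $g\le M$) and, more importantly, irrelevant to obtaining a \emph{lower} bound on $\int_0^M\phi$; no variational or rearrangement argument is needed at all. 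The paper simply notes that $\phi^q$ is concave on $[0,M]$ with $\phi^q(0)=1$ and $\phi^q(M)\ge 0$, so the chord inequality gives $\phi^q(s)\ge 1-s/M$ pointwise, whence $\int_0^M\phi(s)\,\dlat s\ge\int_0^M(1-s/M)^{1/q}\,\dlat s=Mq/(q+1)$. You allude to exactly this chord comparison in your final paragraph; that \emph{is} the complete argument, and the extremality discussion preceding it should be discarded in its favor.
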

\begin{proof}
Let $\mu$ be the probability measure whose density function is given by
\[
\dlat\mu(x) = \frac{f(x)}{\int_K f(x) \,\dlat x}\dlat x.
\]
Since $g$ is concave, using Jensen's inequality we get that
\begin{equation*}
\begin{split}
    g\left(\frac{\int_{K} x f(x) \,\dlat x}{\int_{K} f(x) \,\dlat x}\right) = g\left(\int_K x \, \dlat\mu(x)\right) 
    &\geq \int_K g(x) \,\dlat\mu(x)\\ 
    &= \int_0^{\|g\|_{\infty}}\mu\bigl(\{x \in K : g(x) \geq t\}\bigr)\,\dlat t,
\end{split}
\end{equation*}
where in the last identity we have used Fubini's theorem. Now, since the density of $\mu$, with respect to the Lebesgue measure, is $p$-concave,  from the Borell-Brascamp-Lieb inequality \eqref{e:BBL} we have that
the function $\varphi(t) := \mu\bigl(\{x \in K : g(x) \geq t\}\bigr)$ is $\bigl(p/(np + 1)\bigr)$-concave. 
Indeed, it is enough to apply Theorem \ref{t:BBL} with the functions 
\[
\frac{f\cdot\chi_{_{\{x\in K\,:\, g(x)\geq t_1\}}}}{\int_K f(x) \,\dlat x}, \quad \frac{f\cdot\chi_{_{\{x\in K\,:\, g(x)\geq t_2\}}}}{\int_K f(x) \,\dlat x} \quad \text{ and } \quad \frac{f\cdot\chi_{_{\{x\in K\,:\, g(x)\geq (1-\lambda)t_1+\lambda t_2\}}}}{\int_K f(x) \,\dlat x}
\] 
for any $t_1,t_2\in\R$, where $\chi_{_M}$ denotes the characteristic function of the set $M$, one easily obtains the desired concavity of $\varphi$.
Hence, and taking into account that $\varphi(0)=1$ and $\varphi\bigl(\|g\|_{\infty}\bigr)\geq0$, we may assure that $\varphi(t)^{p/(np + 1)} \geq (1 - t/\|g\|_{\infty})$ for all $t \in [0,\|g\|_{\infty}]$. So, by integrating we obtain that
\begin{equation*}
\begin{split}
\int_0^{\|g\|_{\infty}}\mu\bigl(\{x \in K : g(x) \geq t\}\bigr)\,\dlat t 
&\geq \int_0^{\|g\|_{\infty}}(1 - t/\|g\|_{\infty})^{(np + 1)/p}\,\dlat t\\
&= \left(\frac{p}{(n + 1)p + 1}\right)\|g\|_{\infty},
\end{split}
\end{equation*}
from where the result immediately follows.
\end{proof}

\smallskip

We conclude this section by showing Corollary \ref{c:first_parcial_result}.

\begin{proof}[Proof of Corollary \ref{c:first_parcial_result}]
Denoting by $[a,b]$ the support of $f$, let $\bar{f},g:[a,b]\longrightarrow\R_{\geq0}$ be the functions given by
\[
g(t)=\vol\bigl(K \cap (tu + H^{-})\bigr)^{p/(p+1)}=\left(\int_a^t f(s)\,\dlat s\right)^{p/(p+1)}
\] 
and 
\[
\bar{f}(t)=\vol_{n-1}\bigl(K\cap(tu+H)\bigr)^r=f(t)^r.
\]
By hypothesis, it is clear that $\bar{f}$ is $(p/r)$-concave, whereas from the Borell-Brascamp-Lieb inequality \eqref{e:BBL} we have that $g$ is concave on $[a,b]$. So, from Proposition \ref{p:first_parcial_result} applied to the functions $\bar{f}$ and $g$, and taking into account that $\centroid{r}=0$, we get that
\begin{equation*}
\begin{split}
\vol(K^{-}) = g\left(\frac{\int_a^b t \bar{f}(t) \,\dlat t}{\int_a^b \bar{f}(t)\,\dlat t}\right)^{(p+1)/p} 
&\geq \left(\frac{p}{2p + r}\right)^{(p+1)/p}\|g\|_{\infty}^{(p+1)/p}\\
&= \left(\frac{p}{2p + r}\right)^{(p+1)/p}\vol(K),
\end{split}
\end{equation*}
as desired.
\end{proof}

In the following section, the tighter inequalities collected in Theorem \ref{t:p_concave_r_centroid}, which improve the one that was obtained in Corollary \ref{c:first_parcial_result}, will be shown. 
More precisely, we will enhance the constant given by \eqref{e:first_parcial_result}, namely, $\bigl(p/(2p + r)\bigr)^{(p+1)/p}$, by other two constants (depending on whether $r$ is either less than one or greater than one). Furthermore, these new constants do fit well with \eqref{e:Grunbaum_p>0}, since, in fact, they all coincide when $r=1$, that is, when one considers the centroid of the set.

\section{Proof of Theorem \ref{t:functional_centroid_general_alpha_beta}}\label{s:proof}
Here we are going to slightly modify the approach followed in \cite[Theorem~8]{StZh} (which yields our case $\beta=n-1$) to cover all the cases for a general concave function $h$, which will allow us to show Theorem \ref{t:functional_centroid_general_alpha_beta}. To this aim, we split the proof into two steps, depending on whether $\beta<\alpha$ or $\alpha<\beta$. Note also that the case $\alpha=\beta$ is equivalent to the statement of Theorem \ref{t:Grunbaum_pgeq0} (by just taking $p:=1/\alpha=1/\beta$ and $f=h^\alpha=h^\beta$).
Before distinguishing whether $\beta<\alpha$ or $\alpha<\beta$, we make some general considerations.

We may assume, without loss of generality, that $a=0$. Now, let $L\subset\R^2$ be the convex body
\[
L:=\bigl\{(x,y)\in\R^2\,:\,0\leq x\leq b,\,0\leq y\leq h(x)\bigr\}
\]
and notice that, from Fubini's theorem, we have
\begin{equation}\label{e:centroide_alpha(h)}
\centroid{\alpha}(h)=\frac{\int_0^b th(t)^\alpha\,\dlat t}{\int_0^b h(t)^\alpha\,\dlat t}=\frac{\int_L\langle x,\e_1\rangle\langle x,\e_2\rangle^{\alpha-1}\,\dlat x}{\int_L\langle x,\e_2\rangle^{\alpha-1}\,\dlat x}.
\end{equation}
Let $\mu_\beta$ be the measure on $\R^2$ given by $\,\dlat\mu_\beta(x)=\langle x,\e_2\rangle^{\beta-1} \,\dlat x$. Then
\[
\centroid{\alpha}(h)=\frac{\int_L\langle x,\e_1\rangle\langle x,\e_2\rangle^{\alpha-\beta}\,\dlat\mu_\beta(x)}{\int_L\langle x,\e_2\rangle^{\alpha-\beta}\,\dlat\mu_\beta(x)}
\]
and
\begin{equation*}
\begin{split}
\frac{\int_{\centroid{\alpha}(h)}^b h(t)^\beta\,\dlat t}{\int_{0}^b h(t)^\beta\,\dlat t}
&= \frac{\int_{\{x\in L\,:\,\langle x,\e_1\rangle\geq \centroid{\alpha}(h)\}}\langle x,\e_2\rangle^{\beta-1}\,\dlat x} {\int_L\langle x,\e_2\rangle^{\beta-1}\,\dlat x}\\
&=\frac{\mu_\beta\{x\in L\,:\,\langle x,\e_1\rangle\geq \centroid{\alpha}(h)\}}{\mu_\beta(L)}.
\end{split}
\end{equation*}
We may assure that there exist $\gamma<\delta$ and $c>0$ in such a way that the affine decreasing function $g:[\gamma,\delta]\longrightarrow[0,\infty)$ given by
\[
g(t)=c(\delta-t)
\]
satisfies
\begin{equation}\label{e:conditions_decreasing_affine}
\begin{array}{l}
\text{(i)}\; g\bigl(\centroid{\alpha}(h)\bigr)=h\bigl(\centroid{\alpha}(h)\bigr),\\[4mm]
\text{(ii)}\; \int_{\gamma}^\delta g(t)^\beta\,\dlat t=\int_{0}^b h(t)^\beta\,\dlat t, \quad\text{ and }\\[4mm]
\text{(iii)}\; \int_{\centroid{\alpha}(h)}^\delta g(t)^\beta\, \dlat t =
\int_{\centroid{\alpha}(h)}^b h(t)^\beta\,\dlat t.
\end{array}
\end{equation}
Indeed, taking
\[
\delta=\frac{\beta+1}{h\bigl(\centroid{\alpha}(h)\bigr)^\beta}\int_{\centroid{\alpha}(h)}^b h(t)^{\beta}\,\dlat t + \, \centroid{\alpha}(h),
\quad c=\frac{h\bigl(\centroid{\alpha}(h)\bigr)}{\delta - \centroid{\alpha}(h)}
\]
and 
\[\gamma= \delta - \left(\frac{\beta+1}{c^\beta}\int_0^b h(t)^\beta\,\dlat t\right)^{1/(\beta+1)},\]
elementary computations show \eqref{e:conditions_decreasing_affine}.

\smallskip

\begin{figure}[h]
\centering
\includegraphics[width=\textwidth]{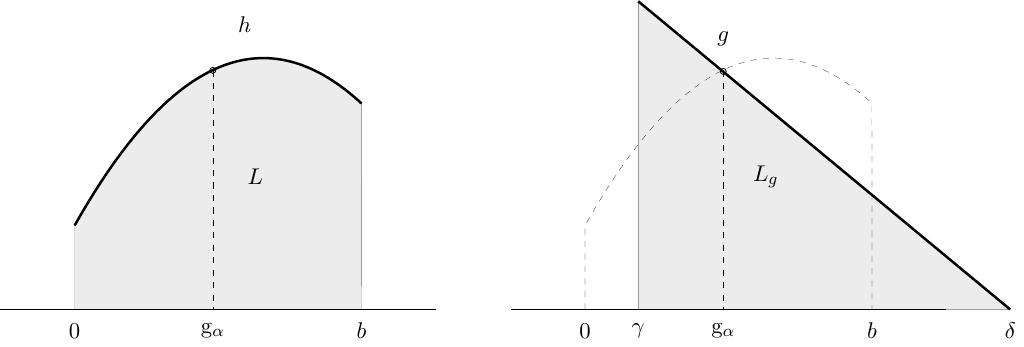}
\caption{Sets $L$ and $L_g$.}
\label{f:L_and_L_g}
\end{figure}

\medskip

Then, denoting by $L_g\subset\R^2$ the triangle (see Figure \ref{f:L_and_L_g}) given by
\[
L_g:=\bigl\{(x,y)\in\R^2\,:\,\gamma\leq x\leq \delta,\,0\leq y\leq g(x)\bigr\},
\]
from (ii) and (iii) in \eqref{e:conditions_decreasing_affine}  (by using Fubini's theorem), and the relative position of $h$ and $g$, given by the concavity of $h$ and the relation $g\bigl(\centroid{\alpha}(h)\bigr)=h\bigl(\centroid{\alpha}(h)\bigr)$ (see Figure \ref{f:h_and_g}), we have that
\begin{equation}\label{e:conditions_decreasing_affine_II}
\begin{array}{l}
\text{(i)}\; \mu_\beta (L)=\mu_\beta(L_g),\\[3mm]
\text{(ii)}\; \mu_\beta \bigl(\{x\in L:\langle x,\e_1\rangle\geq \centroid{\alpha}(h)\}\bigr)
\!=\!\mu_\beta\bigl(\{x\in L_g:\langle x,\e_1\rangle\geq \centroid{\alpha}(h)\}\bigr),\\[3mm]
\text{(iii)}\; 0\leq\gamma\leq \centroid{\alpha}(h)\leq b\leq\delta.
\end{array}
\end{equation}

\smallskip

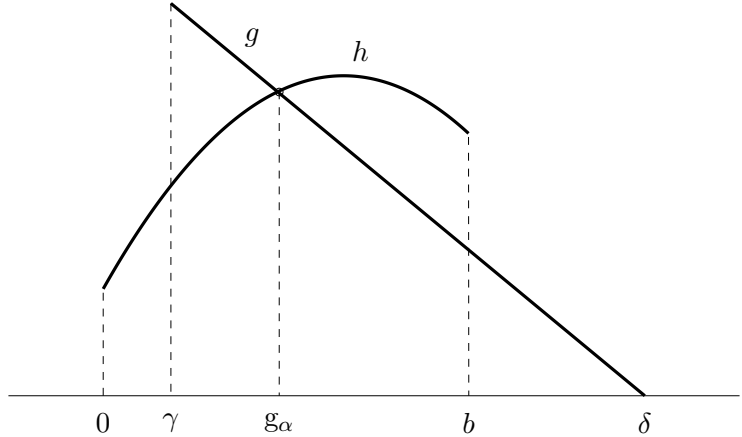
\begin{figure}[h]
\centering
\begin{tikzpicture}[scale=1.8]
\draw[ultra thin,dashed] (-0.2,2.9) -- (-0.2,0);
\draw[very thick] (-0.2,2.9) -- (3.3,0);
\draw[ultra thin, dashed, domain=-0.7:2, samples = 1000, variable=\x] (-0.7,0) -- plot({\x}, {-0.5*(\x+1.1)*(\x-3.25)}) -- (2,0);
\draw[very thick, domain=-0.7:2, samples = 1000, variable=\x] plot({\x}, {-0.5*(\x+1.1)*(\x-3.25)});
\draw[very thin] (-1.4,0) -- (4,0);
\draw (0.4,2.65) node{$g$};
\draw (1.2,2.55) node{$h$};
\draw (-0.2,-0.2) node{$\gamma$};
\draw (3.3,-0.2) node{$\delta$};
\draw (-0.7,-0.2) node{$0$};
\draw (2,-0.2) node{$b$};
\draw (0.6,2.245) circle (0.8pt);
\draw[ultra thin, dashed] (0.6,2.25) -- (0.6,0);
\draw (0.6,-0.2) node{$\mathrm{g}_{\alpha}$};
\end{tikzpicture}
\caption{Relative position of the functions $h$ and $g$.}
\label{f:h_and_g}
\end{figure}

\medskip

Moreover, defining $g(t):=0$ for all $t\in[0,\gamma]$ and $h(t):=0$ for any $t\in[b,\delta]$, there exists $x_0\in(\centroid{\alpha}(h), b]$ such that $h(t)\geq g(t)$ for all $t\in [0,\gamma] \cup [\centroid{\alpha}(h), x_0]$  and $h(t)\leq g(t)$ otherwise (see Figure \ref{f:h_and_g} -there, observe that $x_0$ coincides with $b$). Hence, on the one hand, for every $s \in [\centroid{\alpha}(h), x_0]$
(the case of $s \geq x_0$ immediately follows) we get that
\begin{equation*}
\begin{split}
\int_{s}^{b} h(t)^{\beta}\,\dlat t &= \int_{\centroid{\alpha}(h)}^{b} h(t)^{\beta}\,\dlat t - \int_{\centroid{\alpha}(h)}^{s}  h(t)^{\beta}\,\dlat t\\
&\leq \int_{\centroid{\alpha}(h)}^{\delta} g(t)^{\beta}\,\dlat t - \int_{\centroid{\alpha}(h)}^{s} g(t)^{\beta}\,\dlat t = \int_{s}^{\delta} g(t)^{\beta}\,\dlat t.
\end{split}
\end{equation*}
On the other hand, for every $s \in [\gamma, \centroid{\alpha}(h)]$ (again, the case of $s \leq \gamma$ immediately follows) we have that
\begin{equation*}
\begin{split}
\int_{s}^{b} h(t)^{\beta}\,\dlat t &= \int_{\centroid{\alpha}(h)}^{b} h(t)^{\beta}\,\dlat t + \int_s^{\centroid{\alpha}(h)} h(t)^{\beta}\,\dlat t\\
&\leq \int_{\centroid{\alpha}(h)}^{\delta} g(t)^{\beta}\,\dlat t + \int_s^{\centroid{\alpha}(h)} g(t)^{\beta}\,\dlat t = \int_{s}^{\delta} g(t)^{\beta}\,\dlat t.
\end{split}
\end{equation*}
Therefore,
\begin{equation}\label{e:ineq_beta_measures}
\begin{split}
\mu_\beta\bigl(L^+(\e_1,s)\bigr)&=\mu_\beta\bigl(\{x\in L\,:\,\langle x,\e_1\rangle\geq s\}\bigr) = \int_{s}^{b} h(t)^{\beta}\,\dlat t\\
&\leq \int_{s}^{\delta} g(t)^{\beta}\,\dlat t = \mu_\beta\bigl(\{x\in L_g\,:\,\langle x,\e_1\rangle\geq s\}\bigr)=\mu_\beta\bigl(L_g^+(\e_1,s)\bigr)
\end{split}
\end{equation}
for every $s\in[0,\delta]$, where we are using the notation $L_g^+(u,c)$ to represent the set $\bigl(L_g\bigr)^+(u,c)$, for a given direction $u\in\s^{n-1}$ and a real number $c\in\R$.

\subsection{The case of $\beta<\alpha$}\label{s:proof_beta<alpha}
We devote this section to proving the first part of Theorem \ref{t:functional_centroid_general_alpha_beta}, namely, we show \eqref{e:int_ineq_beta<alpha} provided that 
$\beta<\alpha$.

\smallskip

We will first prove that there exists a non-negative and concave function $\varphi:[\gamma,\delta]\longrightarrow[0,\|h\|_\infty]$ such that
\begin{equation}\label{e:bounding_centroide_alpha(h)}
\centroid{\alpha}(h)\leq\frac{\int_{L_g}\langle x,\e_1\rangle\varphi(\langle x,\e_1\rangle)^{\alpha-\beta} \,\dlat\mu_\beta(x)}{\int_{L_g}\varphi(\langle x,\e_1\rangle)^{\alpha-\beta}\, \dlat\mu_\beta(x)}.
\end{equation}
To this aim, we consider the function $W:[0,\|h\|_\infty]\longrightarrow[0,\mu_\beta(L)]$ given by
\[
W(s)=\mu_\beta\bigl(\{x\in L\,:\,\langle x,\e_2\rangle\geq s\}\bigr),
\]
which is clearly both strictly decreasing and surjective.
We may then define the non-negative function $w:[0,\|h\|_\infty]\longrightarrow[\gamma,\delta]$ that satisfies
\[
W(s)=\mu_\beta\bigl(\{x\in L_g\,:\,\langle x,\e_1\rangle\geq w(s)\}\bigr)
\]
for any $s\in[0,\|h\|_\infty]$.
Indeed, since
\[
W(s)=\frac{c^\beta(\delta-w(s))^{\beta+1}}{\beta(\beta+1)},
\]
we get that
\[
w(s)=\delta-\left(\frac{\beta(\beta+1)}{c^\beta}W(s)\right)^{1/(\beta+1)}.
\]
Notice that, from the Borell-Brascamp-Lieb inequality \eqref{e:BBL}, we have that
the function $W^{1/(\beta+1)}$ is concave (since the density of $\mu_\beta$, with respect to the Lebesgue measure, is $\bigl(1/(\beta-1)\bigr)$-concave).
Therefore, $w$ is strictly increasing, surjective and convex, and then there exists the function $\varphi=w^{-1}:[\gamma,\delta]\longrightarrow[0,\|h\|_\infty]$, which is further (strictly increasing and) concave.

\smallskip

Now, we will start by bounding from above the right-hand side of \eqref{e:centroide_alpha(h)}.
By using Fubini's theorem, (iii) in \eqref{e:conditions_decreasing_affine_II} and \eqref{e:ineq_beta_measures}, we have
\begin{equation}\label{e:bounding_numer_centroide_alpha(h)}
\begin{split}
\frac{1}{\alpha-\beta}&\int_L\langle x,\e_1\rangle\langle x,\e_2\rangle^{\alpha-\beta}\,\dlat\mu_\beta(x)\\
=&\int_L\int_0^{\langle x,\e_1\rangle}\,\dlat s_1\int_0^{\langle x,\e_2\rangle}s_2^{\alpha-\beta-1}
\,\dlat s_2\,\dlat\mu_\beta(x)\\
=&\int_0^{b}\int_0^{\|h\|_\infty}s_2^{\alpha-\beta-1}\mu_\beta\bigl(L^+(\e_1,s_1)\cap L^+(\e_2,s_2)\bigr)\,\dlat s_2\,\dlat s_1\\
\leq&\int_0^{b}\int_0^{\|h\|_\infty}s_2^{\alpha-\beta-1}\min\left\{\mu_\beta\bigl(L^+(\e_1,s_1)\bigr),
\mu_\beta\bigl(L^+(\e_2,s_2)\bigr)\right\}\,\dlat s_2\,\dlat s_1\\
\leq&\int_0^{b}\int_0^{\|h\|_\infty}s_2^{\alpha-\beta-1}\min\left\{\mu_\beta\bigl(L_g^+(\e_1,s_1)\bigr),
\mu_\beta\bigl(L^+(\e_2,s_2)\bigr)\right\}\,\dlat s_2\,\dlat s_1\\
\leq&\int_0^{\delta}\int_0^{\|h\|_\infty}s_2^{\alpha-\beta-1}\min\left\{\mu_\beta\bigl(L_g^+(\e_1,s_1)\bigr),
\mu_\beta\bigl(L^+(\e_2,s_2)\bigr)\right\}\,\dlat s_2\,\dlat s_1.
\end{split}
\end{equation}

So, on the one hand, from \eqref{e:bounding_numer_centroide_alpha(h)} we get
\begin{equation}\label{e:upper_bound_numer_centroide_alpha(h)}
\begin{split}
\frac{1}{\alpha-\beta}&\int_L\langle x,\e_1\rangle\langle x,\e_2\rangle^{\alpha-\beta}\,\dlat\mu_\beta(x)\\
\leq&\int_0^{\delta}\int_0^{\|h\|_\infty}s_2^{\alpha-\beta-1}\min\left\{\mu_\beta\bigl(L_g^+(\e_1,s_1)\bigr),
\mu_\beta\bigl(L^+(\e_2,s_2)\bigr)\right\}\,\dlat s_2\,\dlat s_1\\
=&\int_0^{\delta}\int_0^{\|h\|_\infty}s_2^{\alpha-\beta-1}\min\left\{\mu_\beta\bigl(L_g^+(\e_1,s_1)\bigr),
\mu_\beta\Bigl(L_g^+\bigl(\e_1,w(s_2)\bigr)\Bigr)\right\}\,\dlat s_2\,\dlat s_1\\
=&\int_0^{\delta}\int_0^{\|h\|_\infty}s_2^{\alpha-\beta-1}\mu_\beta\Bigl(L_g^+(\e_1,s_1)\cap
L_g^+\bigl(\e_1,w(s_2)\bigr)\Bigr)\,\dlat s_2\,\dlat s_1\\
=&\,\frac{1}{\alpha-\beta}\int_{L_g}\langle x,\e_1\rangle\varphi(\langle x,\e_1\rangle)^{\alpha-\beta}\,\dlat\mu_\beta(x),
\end{split}
\end{equation}
where in the last equality above we have used that $\langle x,\e_1\rangle\geq w(s_2)$ if and only if $\varphi(\langle x,\e_1\rangle)\geq s_2$.

\medskip

On the other hand, since $\alpha-\beta>0$, from Fubini's theorem we have that
\begin{equation}\label{e:denom_centroide_alpha(h)}
\begin{split}
\frac{1}{\alpha-\beta}\int_L\langle x,\e_2\rangle^{\alpha-\beta}\,\dlat\mu_\beta(x)
=&\int_0^{\|h\|_\infty}s^{\alpha-\beta-1}\mu_\beta\bigl(\{x\in L\,:\,\langle x,\e_2\rangle\geq s\}\bigr)\,\dlat s\\[1mm]
=&\int_0^{\|h\|_\infty}s^{\alpha-\beta-1}\mu_\beta\bigl(\{x\in L_g\,:\,\langle x,\e_1\rangle\geq w(s)\}\bigr)\,\dlat s\\[1mm]
=&\int_0^{\|h\|_\infty}s^{\alpha-\beta-1}\mu_\beta\bigl(\{x\in L_g\,:\,\varphi(\langle x,\e_1\rangle)\geq s\}\bigr)
\,\dlat s\\[1mm]
=&\int_{L_g}\varphi(\langle x,\e_1\rangle)^{\alpha-\beta}\,\dlat\mu_\beta(x).
\end{split}
\end{equation}
Hence, from \eqref{e:upper_bound_numer_centroide_alpha(h)} and \eqref{e:denom_centroide_alpha(h)} (and using \eqref{e:centroide_alpha(h)}) we obtain \eqref{e:bounding_centroide_alpha(h)}, as desired.

\medskip

Now we will prove that for any concave function $\varphi:[\gamma,\delta]\longrightarrow[0,\infty)$ we have that
\begin{equation}\label{e:ineq_L_g_varphi}
\frac{\int_{L_g}\langle x,\e_1\rangle\varphi(\langle x,\e_1\rangle)^{\alpha-\beta} \,\dlat\mu_\beta(x)}{\int_{L_g}\varphi(\langle x,\e_1\rangle)^{\alpha-\beta}\,\dlat\mu_\beta(x)}
\leq \frac{\int_{L_g}\langle x,\e_1\rangle\left(\langle x,\e_1\rangle-\gamma\right)^{\alpha-\beta} \,\dlat\mu_\beta(x)}{\int_{L_g}\left(\langle x,\e_1\rangle-\gamma\right)^{\alpha-\beta}\, \dlat\mu_\beta(x)}.
\end{equation}
To this aim, let $C_1>0$ be such that
\begin{equation}\label{e:C_1_L_g_varphi}
\int_{L_g}\bigl(C_1(\langle x,\e_1\rangle-\gamma)\bigr)^{\alpha-\beta}\,\dlat\mu_\beta(x)
=\int_{L_g}\varphi(\langle x,\e_1\rangle)^{\alpha-\beta}\,\dlat\mu_\beta(x).
\end{equation}
Since the latter identity is equivalent (by Fubini's theorem) to
\begin{equation*}
\int_\gamma^{\delta}\left(\bigl(C_1(t-\gamma)\bigr)^{\alpha-\beta}
-\varphi(t)^{\alpha-\beta}\right)
g(t)^\beta\,\dlat t=0,
\end{equation*}
we may assert, taking into account that $\varphi$ is concave, that
there exists $t_0\in(\gamma,\delta)$ such that
\begin{equation}\label{e:C_1_L_g_varphi2}
\begin{array}{l}
\text{(i)}\; C_1(t-\gamma)\leq\varphi(t) \text{ for every } \gamma\leq t\leq t_0, \quad \text{and}\\[6mm]
\text{(ii)}\; C_1(t-\gamma)\geq\varphi(t) \text{ for every } t_0\leq t\leq \delta.
\end{array}
\end{equation}
Then, from \eqref{e:C_1_L_g_varphi} and \eqref{e:C_1_L_g_varphi2} (and using Fubini's theorem), we get
\begin{equation*}
\begin{split}
\beta\biggl(\int_{L_g}\langle x,\e_1\rangle&\varphi(\langle x,\e_1\rangle)^{\alpha-\beta}\,\dlat\mu_\beta(x)
-\int_{L_g}\langle x,\e_1\rangle\bigl(C_1(\langle x,\e_1\rangle-\gamma)\bigr)^{\alpha-\beta}\,\dlat\mu_\beta(x)\biggr)\\
=&\int_\gamma^\delta t\Bigl(\varphi(t)^{\alpha-\beta}-\bigl(C_1(t-\gamma)\bigr)^{\alpha-\beta}\Bigr)g(t)^\beta\,\dlat t\\
=&\int_\gamma^{t_0} t\Bigl(\varphi(t)^{\alpha-\beta}-\bigl(C_1(t-\gamma)\bigr)^{\alpha-\beta}\Bigr)g(t)^\beta\,\dlat t\\
+&\int_{t_0}^\delta t\Bigl(\varphi(t)^{\alpha-\beta}-\bigl(C_1(t-\gamma)\bigr)^{\alpha-\beta}\Bigr)g(t)^\beta\,\dlat t\\
\leq& \,t_0\int_\gamma^{t_0}\left(\varphi(t)^{\alpha-\beta}-\bigl(C_1(t-\gamma)\bigr)^{\alpha-\beta}\right)
g(t)^\beta\,\dlat t\\
+& \,t_0\int_{t_0}^\delta \left(\varphi(t)^{\alpha-\beta}-\bigl(C_1(t-\gamma)\bigr)^{\alpha-\beta}\right)g(t)^\beta\,\dlat t\\
=& \,t_0\int_\gamma^{\delta}\left(\varphi(t)^{\alpha-\beta}-\bigl(C_1(t-\gamma)\bigr)^{\alpha-\beta}\right)
g(t)^\beta\,\dlat t\\
=& \,\beta t_0\int_{L_g}\left(\varphi(\langle x,\e_1\rangle)^{\alpha-\beta}-
\bigl(C_1(\langle x,\e_1\rangle-\gamma)\bigr)^{\alpha-\beta}\right)\,\dlat\mu_\beta(x)=0.
\end{split}
\end{equation*}
Thus, we have
\begin{equation*}
\int_{L_g}\langle x,\e_1\rangle\varphi(\langle x,\e_1\rangle)^{\alpha-\beta}\,\dlat\mu_\beta(x)
\leq \int_{L_g}\langle x,\e_1\rangle\bigl(C_1(\langle x,\e_1\rangle-\gamma)\bigr)^{\alpha-\beta} \,\dlat\mu_\beta(x),
\end{equation*}
which, together with \eqref{e:C_1_L_g_varphi}, yields \eqref{e:ineq_L_g_varphi}.

\smallskip

Now, we will compute the right-hand side of \eqref{e:ineq_L_g_varphi}. On the one hand,
\begin{equation*}
\begin{split}
\int_{L_g}\left(\langle x,\e_1\rangle-\gamma\right)^{\alpha-\beta}\,\dlat\mu_\beta(x) =&\frac{c^\beta}{\beta}\int_\gamma^\delta(t-\gamma)^{\alpha-\beta}(\delta-t)^\beta\,\dlat t\\ =&\frac{c^\beta}{\beta}(\delta-\gamma)^{\alpha+1}\int_0^1s^{\alpha-\beta}(1-s)^\beta\,\dlat s\\
=&\frac{c^\beta}{\beta}(\delta-\gamma)^{\alpha+1}
\frac{\Gamma\left(\alpha-\beta+1\right)\Gamma\left(\beta+1\right)}{\Gamma\left(\alpha+2\right)},
\end{split}
\end{equation*}
whereas, on the other hand, we obtain
\begin{equation*}
\begin{split}
&\quad\quad\int_{L_g}\langle x,\e_1\rangle\left(\langle x,\e_1\rangle-\gamma\right)^{\alpha-\beta}\,\dlat\mu_\beta(x)
=\frac{c^\beta}{\beta}\int_\gamma^\delta t(t-\gamma)^{\alpha-\beta}(\delta-t)^\beta\,\dlat t\\
&=\gamma\frac{c^\beta}{\beta}(\delta-\gamma)^{\alpha+1}\int_0^1s^{\alpha-\beta}(1-s)^\beta\,\dlat s+\frac{c^\beta}{\beta}(\delta-\gamma)^{\alpha+2}\int_0^1s^{\alpha-\beta+1}(1-s)^\beta\,\dlat s\\
&=\frac{c^\beta}{\beta}(\delta-\gamma)^{\alpha+1}
\frac{\Gamma\left(\alpha-\beta+1\right)\Gamma\left(\beta+1\right)}{\Gamma\left(\alpha+2\right)}
\left(\gamma+(\delta-\gamma)\frac{\alpha-\beta+1}{\alpha+2}\right).
\end{split}
\end{equation*}
Hence, we have
\[
\frac{\int_{L_g}\langle x,\e_1\rangle\left(\langle x,\e_1\rangle-\gamma\right)^{\alpha-\beta} \,\dlat\mu_\beta(x)}
{\int_{L_g}\left(\langle x,\e_1\rangle-\gamma\right)^{\alpha-\beta}\,\dlat\mu_\beta(x)}
=\gamma+(\delta-\gamma)\frac{\alpha-\beta+1}{\alpha+2},
\]
and therefore, this together with \eqref{e:bounding_centroide_alpha(h)} and \eqref{e:ineq_L_g_varphi}
yields
\[
\centroid{\alpha}(h)\leq \gamma+(\delta-\gamma)\frac{\alpha-\beta+1}{\alpha+2}=:g_0.
\]
Finally, the latter relation jointly with (ii) and (iii) in \eqref{e:conditions_decreasing_affine} gives us
\begin{equation*}
\begin{split}
\frac{\int_{\centroid{\alpha}(h)}^b h(t)^\beta\,\dlat t}{\int_{a}^bh(t)^\beta\,\dlat t}
&=\frac{\int_{\centroid{\alpha}(h)}^\delta g(t)^\beta\,\dlat t}{\int_{\gamma}^\delta g(t)^\beta\,\dlat t}
\geq\frac{\int_{g_0}^\delta g(t)^\beta\,\dlat t}{\int_{\gamma}^\delta g(t)^\beta\,\dlat t}
=\left(\frac{\delta-g_0}{\delta-\gamma}\right)^{\beta+1}\\
&=\left(1-\frac{\alpha-\beta+1}{\alpha+2}\right)^{\beta+1}=\left(\frac{\beta+1}{\alpha+2}\right)^{\beta+1},
\end{split}
\end{equation*}
as desired. This finishes the proof of \eqref{e:int_ineq_beta<alpha}.

\subsection{The case of  $\alpha<\beta$}\label{s:proof_alpha<beta}
Now we show the second part of Theorem \ref{t:functional_centroid_general_alpha_beta}, namely, \eqref{e:int_ineq_alpha<beta} provided that $\alpha<\beta$.
We point out that here we use an approach similar to the one followed in Subsection \ref{s:proof_beta<alpha}, but with the main difference that we need to truncate the sets $L$ and $L_g$ due to certain integrability issues, since now the exponent of some functions under the integral sign (vanishing at some points of the domains of integration) is $\alpha-\beta<0$.

\smallskip

We start by considering the function $W_1:[0,\|h\|_\infty]\longrightarrow[0,\mu_\beta(L)]$ given by
\[
W_1(s)=\mu_\beta\bigl(\{x\in L\,:\,\langle x,\e_2\rangle\leq s\}\bigr),
\]
which is clearly both strictly increasing and surjective.
We may then define the function $w_1:[0,\|h\|_\infty]\longrightarrow[\gamma,\delta]$ that satisfies 
\[
W_1(s)=\mu_\beta\bigl(\{x\in L_g\,:\,\langle x,\e_1\rangle\geq w_1(s)\}\bigr)
\]
for any $s\in[0,\|h\|_\infty]$.
Indeed, since
\[
W_1(s)=\frac{c^\beta\bigl(\delta-w_1(s)\bigr)^{\beta+1}}{\beta(\beta+1)},
\]
we get that
\[
w_1(s)=\delta-\left(\frac{\beta(\beta+1)}{c^\beta}W_1(s)\right)^{1/(\beta+1)}.
\]
Note that, from the Borell-Brascamp-Lieb inequality \eqref{e:BBL}, we have that
the function $W_1^{1/(\beta+1)}$ is concave (since the density of $\mu_\beta$, with respect to the Lebesgue measure, is $\bigl(1/(\beta-1)\bigr)$-concave).
Therefore, $w_1$ is strictly decreasing, surjective and convex, and then there exists the function $\varphi_1=w_1^{-1}:[\gamma,\delta]\longrightarrow[0,\|h\|_\infty]$, which is further (strictly decreasing and) concave.

\smallskip

Now, for any $0<\varepsilon\leq\|h\|_\infty$ we define the sets
\begin{equation*}
L_\varepsilon:=\{x\in L\,:\langle x,\e_2\rangle\geq\varepsilon\}
\end{equation*}
and
\begin{equation*}
L_{g,\varepsilon}:=\{x\in L_g\,:\langle x,\e_1\rangle\leq w_1(\varepsilon)\}.
\end{equation*}

\smallskip

\begin{figure}[h]
\centering
\includegraphics[width=\textwidth]{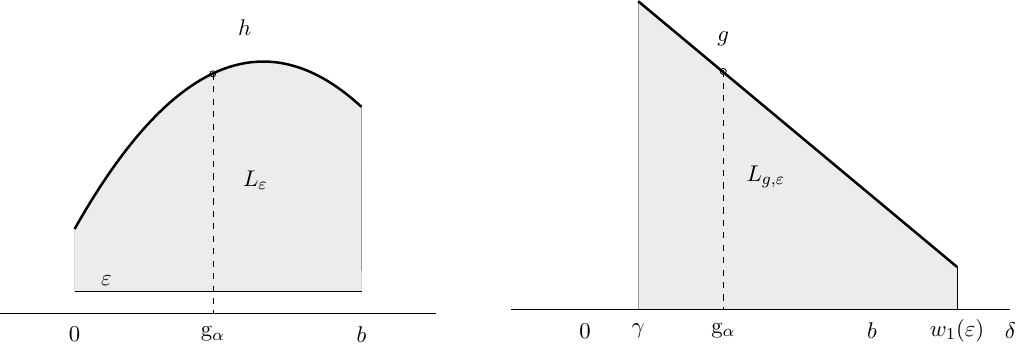}
\caption{Sets $L_{\varepsilon}$ and $L_{g,\varepsilon}$.}
\label{f:Lvarep_and_L_gvarep}
\end{figure}

Notice that, from the definition of $W_1$ and $w_1$ (jointly with (i) in \eqref{e:conditions_decreasing_affine_II}) we have that
\begin{equation}\label{e:conditions_Leps_Lgeps}
\begin{array}{l}
\text{(i)}\; \mu_\beta(L_\varepsilon)=\mu_\beta(L_{g,\varepsilon}), \quad \text{and}\\[4mm]
\text{(ii)}\; \mu_\beta\bigl(\{x\in L_\varepsilon\,:\langle x,\e_2\rangle\leq s\}\bigr) \!=\!
\mu_\beta\bigl(\{x\in L_{g,\varepsilon}\,:\langle x,\e_1\rangle\geq w_1(s)\}\bigr),\\[4mm]
\text{for all } \varepsilon\leq s\leq\|h\|_\infty.
\end{array}
\end{equation}

\smallskip

We will first prove that, for any $0<\varepsilon\leq\|h\|_\infty$, we have
\begin{equation}\label{e:bounding_centroide_alpha(h)_varepsilon}
\begin{split}
\frac{\int_{L_\varepsilon}\langle x,\e_1\rangle\langle x,\e_2\rangle^{\alpha-\beta}\,\dlat\mu_\beta(x)}
{\int_{L_\varepsilon}\langle x,\e_2\rangle^{\alpha-\beta}\,\dlat\mu_\beta(x)}
&<\frac{\int_{L_{g,\varepsilon}}\langle x,\e_1\rangle\varphi_1(\langle x,\e_1\rangle)^{\alpha-\beta}\,\dlat\mu_\beta(x)}
{\int_{L_{g,\varepsilon}}\varphi_1(\langle x,\e_1\rangle)^{\alpha-\beta}\,\dlat\mu_\beta(x)}\\[2mm]
&+\frac{b\delta\varepsilon^\alpha}{\int_{L_\varepsilon}\langle x,\e_2\rangle^\alpha \,\dlat x}.
\end{split}
\end{equation}

To this aim, we will observe that, denoting by $b_\varepsilon=\max\bigl\{x\in\R: (x,y)\in L_\varepsilon\bigr\}$,
for any $0\leq s\leq b_\varepsilon$ we have that
\begin{equation}\label{e:mu_beta_Leps+(e_1,s)<W_1(eps)+mu_beta_Lg,eps+(e_1,s)}
\mu_\beta\bigl(\{x\in L_\varepsilon\,:\,\langle x,\e_1\rangle\geq s\}\bigr)
\leq \mu_\beta\bigl(\{x\in L_{g,\varepsilon}\,:\,\langle x,\e_1\rangle\geq s\}\bigr)+W_1(\varepsilon).
\end{equation}
Indeed, taking into account that if $s\leq w_1(\varepsilon)$ then
\begin{equation*}
\mu_\beta\bigl(\{x\in L_g\,:\,\langle x,\e_1\rangle\geq s\}\bigr)
=\mu_\beta\bigl(\{x\in L_{g,\varepsilon}\,:\,\langle x,\e_1\rangle\geq s\}\bigr)+W_1(\varepsilon),
\end{equation*}
and if $s>w_1(\varepsilon)$ then
\begin{equation*}
\mu_\beta\bigl(\{x\in L_g\,:\,\langle x,\e_1\rangle\geq s\}\bigr)\leq W_1(\varepsilon),
\end{equation*}
we get, from \eqref{e:ineq_beta_measures}, that
\begin{equation*}
\begin{split}
\mu_\beta\bigl(\{x\in L_\varepsilon\,:\,\langle x,\e_1\rangle\geq s\}\bigr)
&\leq\mu_\beta\bigl(\{x\in L\,:\,\langle x,\e_1\rangle\geq s\}\bigr)\\
&\leq\mu_\beta\bigl(\{x\in L_g\,:\,\langle x,\e_1\rangle\geq s\}\bigr)\\
&\leq\mu_\beta\bigl(\{x\in L_{g,\varepsilon}\,:\,\langle x,\e_1\rangle\geq s\}\bigr)+W_1(\varepsilon)
\end{split}
\end{equation*}
for all $0\leq s\leq b_\varepsilon$, which shows \eqref{e:mu_beta_Leps+(e_1,s)<W_1(eps)+mu_beta_Lg,eps+(e_1,s)}.
This, together with
\[
W_1(\varepsilon)\leq\int_0^b\int_0^\varepsilon y^{\beta-1}\,\dlat y\,\dlat x = \frac{b}{\beta}\varepsilon^\beta
\]
and (ii) in \eqref{e:conditions_Leps_Lgeps},
implies that
\begin{equation*}
\begin{split}
\min &\left\{\mu_\beta\bigl(L_\varepsilon^+(\e_1,s_1)\bigr),\mu_\beta\bigl(L_\varepsilon^-(\e_2,1/s_2)\bigr)\right\}\\
&\leq\min\left\{\mu_\beta\bigl(L_{g,\varepsilon}^+(\e_1,s_1)\bigr)+
W_1(\varepsilon),\mu_\beta\Bigl(L_{g,\varepsilon}^+\bigl(\e_1, w_1(1/s_2)\bigr)\Bigr)\right\}\\
&\leq W_1(\varepsilon)+ \min\left\{\mu_\beta\bigl(L_{g,\varepsilon}^+(\e_1,s_1)\bigr),\mu_\beta\Bigl(L_{g,\varepsilon}^+\bigl(\e_1, w_1(1/s_2)\bigr)\Bigr)\right\}\\
&\leq \frac{b}{\beta}\varepsilon^\beta+ \min\left\{\mu_\beta\bigl(L_{g,\varepsilon}^+(\e_1,s_1)\bigr),\mu_\beta\Bigl(L_{g,\varepsilon}^+\bigl(\e_1, w_1(1/s_2)\bigr)\Bigr)\right\}
\end{split}
\end{equation*}
for all $0\leq s_1\leq b_\varepsilon$ and all $0<1/s_2\leq\|h\|_\infty$, where again the notations $L_{\varepsilon}^+(u,c)$ and $L_{g,\varepsilon}^+(u,c)$ represent the sets 
$\bigl(L_\varepsilon\bigr)^+(u,c)$ and $\bigl(L_{g,\varepsilon}\bigr)^+(u,c)$, respectively,  for a given direction $u\in\s^{n-1}$ and a real number $c\in\R$.

\smallskip

So, defining $w_1(s):=\gamma$ if $s\geq\|h\|_\infty$, and taking into account that $b_\varepsilon\leq b\leq \delta$ (by (iii) in \eqref{e:conditions_decreasing_affine_II}), from the fact that $\alpha<\beta$ and using Fubini's theorem we have on the one hand that
\begin{equation}\label{e:upper_bound_numer_centroide_alpha(h)_varepsilon}
\begin{split}
& \!\frac{1}{\beta-\alpha}\int_{L_\varepsilon}\langle x,\e_1\rangle\langle x,\e_2\rangle^{\alpha-\beta} \,\dlat\mu_\beta(x)\\
&\quad\,=\int_{L_\varepsilon}\int_0^{\langle x,\e_1\rangle}\dlat s_1\int_0^{1/\langle x,\e_2\rangle}s_2^{\beta-\alpha-1}\,\dlat s_2\,\dlat\mu_\beta(x)\\
&\quad\,=\int_0^{b_\varepsilon}\int_0^{1/\varepsilon}s_2^{\beta-\alpha-1}\mu_\beta\bigl(L_\varepsilon^+(\e_1,s_1)\cap L_\varepsilon^-(\e_2,1/s_2)\bigr)\,\dlat s_2\,\dlat s_1\\
&\quad\,<\int_0^{b_\varepsilon}\int_0^{1/\varepsilon}s_2^{\beta-\alpha-1}\min\left\{
\mu_\beta\bigl(L_\varepsilon^+(\e_1,s_1)\bigr),\mu_\beta\bigl(L_\varepsilon^-(\e_2,1/s_2)\bigr)\right\}
\,\dlat s_2 \, \dlat s_1\\
&\quad\,\leq\int_0^{\delta}\int_0^{1/\varepsilon}s_2^{\beta-\alpha-1}\frac{b}{\beta}\varepsilon^\beta
\,\dlat s_2\,\dlat s_1\\
&\quad\,+\int_0^{\delta}\int_0^{1/\varepsilon}
s_2^{\beta-\alpha-1}\min\left\{\mu_\beta\bigl(L_{g,\varepsilon}^+(\e_1,s_1)\bigr),
\mu_\beta\Bigl(L_{g,\varepsilon}^+\bigl(\e_1, w_1(1/s_2)\bigr) \Bigr)\right\}\,\dlat s_2\,\dlat s_1\\
&\quad\,=\frac{b\delta}{\beta-\alpha}\varepsilon^\alpha+\int_0^{\delta}\int_0^{1/\varepsilon}s_2^{\beta-\alpha-1}
\mu_\beta\Bigl(L_{g,\varepsilon}^+(\e_1,s_1)\cap L_{g,\varepsilon}^+\bigl(\e_1, w_1(1/s_2)\bigr)\Bigr)\,\dlat s_2\,\dlat s_1\\
&\quad\,=\frac{b\delta}{\beta-\alpha}\varepsilon^\alpha+\frac{1}{\beta-\alpha}\int_{L_{g,\varepsilon}}\langle x,\e_1\rangle\varphi_1(\langle x,\e_1\rangle)^{\alpha-\beta}\,\dlat\mu_\beta(x),
\end{split}
\end{equation}
where in the last equality above we have also used that $\langle x,\e_1\rangle\geq w_1(1/s_2)$ if and only if $1/\varphi_1(\langle x,\e_1\rangle)$ $\geq s_2$ (since $\varphi_1$ is decreasing).

\smallskip

On the other hand, since $\alpha<\beta$, from Fubini's theorem (jointly with (ii) in \eqref{e:conditions_Leps_Lgeps})
we have that
\begin{equation}\label{e:denom_centroide_alpha(h)_varepsilon}
\begin{split}
\frac{1}{\beta-\alpha}\int_{L_\varepsilon}\langle x,\e_2\rangle^{\alpha-\beta}\,\dlat\mu_\beta(x)
&=\int_{L_\varepsilon}\int_0^{1/\langle x,\e_2\rangle}s^{\beta-\alpha-1}\,\dlat s\,\dlat\mu_\beta(x)\\
&=\int_0^{1/\varepsilon}s^{\beta-\alpha-1}\mu_\beta\bigl(L_{\varepsilon}^-(\e_2,1/s)\bigr)\,\dlat s\\
&=\int_0^{1/\varepsilon}s^{\beta-\alpha-1}\mu_\beta\Bigl(L_{g,\varepsilon}^+\bigl(\e_1,w_1(1/s)\bigr)\Bigr)\,\dlat s\\
&=\frac{1}{\beta-\alpha}\int_{L_{g,\varepsilon}}\varphi_1(\langle x,\e_1\rangle)^{\alpha-\beta}\,\dlat\mu_\beta(x),
\end{split}
\end{equation}
where in the last equality above we have used again that $\langle x,\e_1\rangle\geq w_1(1/s)$ if and only if $1/\varphi_1(\langle x,\e_1\rangle)$ $\geq s$.
Hence, from \eqref{e:upper_bound_numer_centroide_alpha(h)_varepsilon} and \eqref{e:denom_centroide_alpha(h)_varepsilon}, we obtain \eqref{e:bounding_centroide_alpha(h)_varepsilon}, as desired.

\medskip

Now we will prove that for any concave function $\varphi_1:[\gamma,\delta]\longrightarrow[0,\infty)$ we have that
\begin{equation}\label{e:ineq_L_g_varepsilon_varphi}
\frac{\int_{L_{g,\varepsilon}}\langle x,\e_1\rangle\varphi_1(\langle x,\e_1\rangle)^{\alpha-\beta}\,\dlat\mu_\beta(x)}{\int_{L_{g,\varepsilon}}\varphi_1(\langle x,\e_1\rangle)^{\alpha-\beta}\,\dlat\mu_\beta(x)}
\leq \frac{\int_{L_{g,\varepsilon}}\langle x,\e_1\rangle\left(\delta-\langle x,\e_1\rangle\right)^{\alpha-\beta} \,\dlat\mu_\beta(x)}{\int_{L_{g,\varepsilon}}\left(\delta-\langle x,\e_1\rangle\right)^{\alpha-\beta}\, \dlat\mu_\beta(x)}.
\end{equation}
To this aim, let $C_1>0$ be such that
\begin{equation}\label{e:C_1_L_g_varepsilon_varphi}
\int_{L_{g,\varepsilon}}\bigl(C_1(\varepsilon)(\delta-\langle x,\e_1\rangle)\bigr)^{\alpha-\beta}\,\dlat\mu_\beta(x)
=\int_{L_{g,\varepsilon}}\varphi_1(\langle x,\e_1\rangle)^{\alpha-\beta}\,\dlat\mu_\beta(x).
\end{equation}
Since the latter identity is equivalent (by Fubini's theorem) to
\begin{equation*}
\int_\gamma^{w_1(\varepsilon)}\left(\bigl(C_1(\varepsilon)(\delta-t)\bigr)^{\alpha-\beta}
-\varphi_1(t)^{\alpha-\beta}\right)
g^\beta(t)\,\dlat t=0,
\end{equation*}
we may assert, taking into account that $\varphi_1$ is concave, that
there exists $t_0(\varepsilon)\in(\gamma,\delta)$ such that
\begin{equation}\label{e:C_1_L_g_varepsilon_varphi2}
\begin{array}{l}
\text{(i)}\; C_1(\varepsilon)(\delta-t)\geq\varphi_1(t) \text{ for every } \gamma\leq t\leq t_0(\varepsilon), \quad \text{and}\\[6mm]
\text{(ii)}\; C_1(\varepsilon)(\delta-t)\leq\varphi_1(t) \text{ for every } t_0(\varepsilon)\leq t\leq w_1(\varepsilon).
\end{array}
\end{equation}

Then, from \eqref{e:C_1_L_g_varepsilon_varphi} and \eqref{e:C_1_L_g_varepsilon_varphi2} (taking into account that $\alpha<\beta$), and using Fubini's theorem, we obtain
\begin{equation*}
\begin{split}
\int_{L_{g,\varepsilon}}\langle x,\e_1\rangle&\varphi_1(\langle x,\e_1\rangle)^{\alpha-\beta}\,\dlat\mu_\beta(x)
-\int_{L_{g,\varepsilon}}\langle x,\e_1\rangle\bigl(C_1(\varepsilon)(\delta-
\langle x,\e_1\rangle)\bigr)^{\alpha-\beta}\,\dlat\mu_\beta(x)\\
=&\,\frac{1}{\beta}\int_\gamma^{w_1(\varepsilon)} t\Bigl(\varphi_1(t)^{\alpha-\beta}-\bigl(C_1(\varepsilon)(\delta-t)\bigr)^{\alpha-\beta}\Bigr)g^\beta(t)\,\dlat t\\
=&\,\frac{1}{\beta}\int_\gamma^{t_0(\varepsilon)} t\Bigl(\varphi_1(t)^{\alpha-\beta}-\bigl(C_1(\varepsilon)(\delta-t)\bigr)^{\alpha-\beta}\Bigr)g^\beta(t)\,\dlat t\\
+&\,\frac{1}{\beta}\int_{t_0}^{w_1(\varepsilon)} t\Bigl(\varphi_1(t)^{\alpha-\beta}-\bigl(C_1(\varepsilon)(\delta-t)\bigr)^{\alpha-\beta}\Bigr)g^\beta(t)\,\dlat t\\
\leq&\,\frac{t_0(\varepsilon)}{\beta} \int_\gamma^{t_0(\varepsilon)}\left(\varphi_1(t)^{\alpha-\beta}-\bigl(C_1(\varepsilon)(\delta-t)\bigr)^{\alpha-\beta}\right)
g^\beta(t)\,\dlat t\\
+&\,\frac{t_0(\varepsilon)}{\beta}\int_{t_0(\varepsilon)}^{w_1(\varepsilon)} \left(\varphi_1(t)^{\alpha-\beta}-\bigl(C_1(\varepsilon)(\delta-t)\bigr)^{\alpha-\beta}\right)g^\beta(t)\,\dlat t\\
=&\,\frac{t_0(\varepsilon)}{\beta}\int_\gamma^{w_1(\varepsilon)}\left(\varphi_1(t)^{\alpha-\beta}-\bigl(C_1(\varepsilon)(\delta-t)\bigr)^{\alpha-\beta}\right)
g^\beta(t)\,\dlat t\\
=& \,t_0(\varepsilon)\int_{L_{g,\varepsilon}}\left(\varphi_1(\langle x,\e_1\rangle)^{\alpha-\beta}-
\bigl(C_1(\varepsilon)(\delta-\langle x,\e_1\rangle)\bigr)^{\alpha-\beta}\right)\,\dlat\mu_\beta(x)=0.
\end{split}
\end{equation*}
Thus, we have
\begin{equation*}
\int_{L_{g,\varepsilon}}\langle x,\e_1\rangle\varphi_1(\langle x,\e_1\rangle)^{\alpha-\beta}\,\dlat\mu_\beta(x)
\leq \int_{L_{g,\varepsilon}}\langle x,\e_1\rangle\bigl(C_1(\varepsilon)
(\delta-\langle x,\e_1\rangle)\bigr)^{\alpha-\beta} \,\dlat\mu_\beta(x),
\end{equation*}
which, together with \eqref{e:C_1_L_g_varepsilon_varphi}, yields \eqref{e:ineq_L_g_varepsilon_varphi}.

\smallskip

Hence, from \eqref{e:bounding_centroide_alpha(h)_varepsilon} and \eqref{e:ineq_L_g_varepsilon_varphi}, for every $0<\varepsilon\leq\|h\|_\infty$ we get
\begin{equation*}
\begin{split}
\frac{\int_{L_\varepsilon}\langle x,\e_1\rangle\langle x,\e_2\rangle^{\alpha-\beta}\,\dlat\mu_\beta(x)}
{\int_{L_\varepsilon}\langle x,\e_2\rangle^{\alpha-\beta}\,\dlat\mu_\beta(x)}
&<\frac{\int_{L_{g,\varepsilon}}\langle x,\e_1\rangle\left(\delta-\langle x,\e_1\rangle\right)^{\alpha-\beta} \,\dlat\mu_\beta(x)}
{\int_{L_{g,\varepsilon}}\left(\delta-\langle x,\e_1\rangle\right)^{\alpha-\beta} \,\dlat\mu_\beta(x)}\\[2mm]
&+\frac{b\delta\varepsilon^\alpha}{\int_{L_\varepsilon}\langle x,\e_2\rangle^\alpha\,\dlat x}.
\end{split}
\end{equation*}

Now, taking limits as $\varepsilon\to0^+$ in the above inequality, we have that the left-hand side, namely,
\[
\frac{\int_{L_\varepsilon}\langle x,\e_1\rangle\langle x,\e_2\rangle^{\alpha-\beta}\,\dlat\mu_\beta(x)}
{\int_{L_\varepsilon}\langle x,\e_2\rangle^{\alpha-\beta}\,\dlat\mu_\beta(x)}
=\frac{\int_{L_\varepsilon}\langle x,\e_1\rangle\langle x,\e_2\rangle^{\alpha-1}\,\dlat x}
{\int_{L_\varepsilon}\langle x,\e_2\rangle^{\alpha-1}\,\dlat x},
\]
tends to
\[
\frac{\int_{L}\langle x,\e_1\rangle\langle x,\e_2\rangle^{\alpha-1}\,\dlat x}
{\int_{L}\langle x,\e_2\rangle^{\alpha-1}\,\dlat x}=\centroid{\alpha}(h)
\]
as $\varepsilon\to0^+$ (see \eqref{e:centroide_alpha(h)}).
Furthermore, the first term in the right-hand side,
\begin{equation*}
\frac{\int_{L_{g,\varepsilon}}\langle x,\e_1\rangle\left(\delta-\langle x,\e_1\rangle\right)^{\alpha-\beta} \,\dlat\mu_\beta(x)}{\int_{L_{g,\varepsilon}}\left(\delta-\langle x,\e_1\rangle\right)^{\alpha-\beta} \,\dlat\mu_\beta(x)}
=\frac{\frac{c^\beta}{\beta}\int_\gamma^{w_1(\varepsilon)}t(\delta-t)^{\alpha}\,\dlat t} {\frac{c^\beta}{\beta}\int_\gamma^{w_1(\varepsilon)}(\delta-t)^{\alpha}\,\dlat t},
\end{equation*}
tends to
\[
\centroid{\alpha}(g)=\frac{\int_\gamma^{\delta}t(\delta-t)^{\alpha}\,\dlat t} {\int_\gamma^{\delta}(\delta-t)^{\alpha}dt}
=\left(\delta -
\frac{(\alpha+1)(\delta-\gamma)}{\alpha+2}\right)
\]
for $\varepsilon\to0^+$, whereas
the second term in the right-hand side,
\[
\frac{b\delta\varepsilon^\alpha}{\int_{L_\varepsilon}\langle x,\e_2\rangle^\alpha\, \dlat x},
\]
clearly tends to
\[\frac{0}{\int_{L}\langle x,\e_2\rangle^\alpha\, \dlat x}=0
\]
for $\varepsilon\to0^+$.
Therefore, we have that
\[
\centroid{\alpha}(h)\leq \centroid{\alpha}(g).
\]
Finally, the latter relation jointly with (ii) and (iii) in \eqref{e:conditions_decreasing_affine} gives us
\begin{equation*}
\begin{split}
\frac{\int_{\centroid{\alpha}(h)}^b h(t)^\beta\,\dlat t}
{\int_{0}^b h(t)^\beta\,\dlat t}
&=\frac{\int_{\centroid{\alpha}(h)}^\delta g(t)^\beta\,\dlat t}
{\int_{\gamma}^\delta g(t)^\beta\,\dlat t}
\geq\frac{\int_{\centroid{\alpha}(g)}^\delta g(t)^\beta\,\dlat t}
{\int_{\gamma}^\delta g(t)^\beta\,\dlat t}=\left(\frac{\delta-\centroid{\alpha}(g)}{\delta-\gamma}\right)^{\beta+1}\\
&=\left(\frac{\alpha+1}{\alpha+2}\right)^{\beta+1},
\end{split}
\end{equation*}
as desired. This finishes the proof of \eqref{e:int_ineq_alpha<beta}, and hence also that of Theorem \ref{t:functional_centroid_general_alpha_beta}.

\end{document}